\newtheorem{theorem}{Theorem}
\newtheorem{corollary}{Corollary}
\newtheorem{remark}{Remark}
\newcommand{\SSML}{\texttt{SSMLearn}}
\newcommand{\mSSM}{\texttt{fastSSM}}
\newcommand{\SSMT}{\texttt{SSMTool}}
\newcommand{\ssmorder}{m}
\newcommand{\redorder}{r}
\newcommand{\nforder}{h}
\newcommand{\fullmat}{\vct{A}}
\newcommand{\fnl}{\vct g}
\newcommand{\modf}{\vct f}
\newcommand{\delf}{\tilde{\modf}}
\newcommand{\delp}{\vct{q}}
\newcommand{\fullflow}{\vct{F}}
\newcommand{\ssmflow}{\vct{\Phi}}
\newcommand{\sampmap}{\vct{S}}
\newcommand{\fullc}{x}
\newcommand{\delc}{y}
\newcommand{\Delc}{\vct{Y}}
\newcommand{\redc}{\xi}
\newcommand{\Redc}{\vct{\Xi}}
\newcommand{\nfc}{\zeta}
\newcommand{\modc}{z}
\newcommand{\ssmmat}{\vct{M}}
\newcommand{\diagredmat}{\vct{G}}
\newcommand{\transmat}{\vct{H}}
\newcommand{\nfmat}{\vct{N}}
\newcommand{\lag}{\kappa}
\newcommand{\npoints}{N}
\newcommand{\fulldim}{n}
\newcommand{\deldim}{p}
\newcommand{\ssmdim}{{d}}
\newcommand{\obsdim}{q}
\newcommand{\modi}{k} 
\newcommand{\deli}{j}  
\newcommand{\obsi}{\ell}  
\newcommand{\geni}{i}  
\newcommand{\modiset}{K}  
\newcommand{\obs}{\mu}
\newcommand{\eigv}{\vct{e}}
\newcommand{\Eigv}{\vct{E}}
\newcommand{\tang}{\vct{T}}
\newcommand{\Van}{V}
\newcommand{\D}{\vct{D}}
\newcommand{\real}{\mathrm{Re\, }}
\newcommand{\N}{\mathbb{N}}
\newcommand{\R}{\mathbb{R}}
\newcommand{\Ce}{\mathbb{C}}
\newcommand{\e}{\mathrm{e}}
\newcommand{\mfd}{\mathcal{M}}
\newcommand{\mfdo}{\tilde{\mfd}}
\newcommand{\specsub}{\mathcal{E}}
\newcommand{\ordo}[1]{\mathcal{O}(#1)}
\newcommand{\lilordo}[1]{o(#1)}
\newcommand{\vk}{von Kármán}
\newcommand{\van}{Vandermonde matrix}
\newcommand{\vct}[1]{\bm{#1}}
\DeclareMathOperator{\diag}{diag}
\DeclareMathOperator{\range}{range}
\DeclareMathOperator{\spann}{span}
\title{Model reduction for nonlinearizable dynamics via delay-embedded spectral submanifolds}
\author{Joar Axås$^1$, George Haller$^1$\footnote{Corresponding author. E-mail: georgehaller@ethz.ch}}
\date{%
    $^1$Institute for Mechanical Systems, ETH Zürich, \\Leonhardstrasse 21, 8092 Zurich, Switzerland\\[2ex]%
    \today{}}
\begin{document}

\maketitle

\begin{abstract}
Delay embedding is a commonly employed technique in a wide range of data-driven model reduction methods for dynamical systems, including the Dynamic mode decomposition (DMD), the Hankel alternative view of the Koopman decomposition (HAVOK), nearest-neighbor predictions and the reduction to spectral submanifolds (SSMs).
In developing these applications, multiple authors have observed that delay embedding appears to separate the data into modes, whose orientations depend only on the spectrum of the sampled system.
In this work, we make this observation precise by proving that the eigenvectors of the delay-embedded linearized system at a fixed point are determined solely by the corresponding eigenvalues, even for multi-dimensional observables.
This implies that the tangent space of a delay-embedded invariant manifold can be predicted a priori using an estimate of the eigenvalues.
We apply our results to three datasets to identify multimodal SSMs and analyse their nonlinear modal interactions.
While SSMs are the focus of our study, these results generalize to any delay-embedded invariant manifold tangent to a set of eigenvectors at a fixed point. 
Therefore, we expect this theory to be applicable to a number of data-driven model reduction methods.
\end{abstract}

\section{Background}

Much recent effort in nonlinear dynamics has focused on data-driven model reduction methods.
Such algorithms return a simplified model of the system dynamics based on sampled trajectories from experiments or simulations.
Commonly pursued objectives for developing these methods include dimensionality reduction, sparsity, and interpretability.
Prevalent methods include the proper orthogonal decomposition (POD) \cite{lumley67,awrejcewicz04} and the dynamic mode decomposition (DMD) \cite{schmid10,kutz16,schmid22}, which fit models to data under various linearity assumptions.

Linear models cannot, however, capture characteristically nonlinear (or nonlinearizable) phenomena.
Such phenomena include the coexistence of, and the transition between, isolated and compact stationary states, such as fixed points, limit cycles, and invariant tori \cite{page19}.
To address this shortcoming, the Sparse identification of nonlinear dynamics (SINDy) algorithm fits a sparse nonlinear model to training data using a library of nonlinear functions \cite{brunton16b}.
However, the choice of this library depends on the user \cite{bertsimas22} and the coordinate system used.
Additionally, the size of the library scales up quickly with the problem dimensionality \cite{kutz22}.
While neural networks can pattern-match nonlinear phenomena \cite{chen92,daniel20,salmela21}, the models they return are often difficult to interpret and generalize poorly outside the range of training data \cite{loiseau20}.

In the last few years, spectral submanifolds (SSMs) have appeared as an alternative for model reduction in intrinsically nonlinear systems.
An SSM is the unique smoothest invariant manifold tangent to a nonresonant spectral subspace emanating from a fixed point \cite{cabre03} or a periodic or quasiperiodic orbit \cite{haro06}.
Therefore, an attracting SSM is the ideal candidate for a low-dimensional model of a nonlinear system \cite{haller16,ponsioen18}.
Concepts related to SSMs include nonlinear normal modes (NNMs) defined either as sets of periodic motions in conservative systems \cite{rosenberg62,vakakis97,kerschen09} or invariant manifolds \cite{shaw91,shaw93} and invariant spectral foliations \cite{szalai20}.
Here, we will apply SSMs, as they are unique, exist under well-defined conditions in dissipative systems, can have arbitrary dimensions, and can include internally resonant modes.

After the computation of an SSM, we can project either equations or data onto it to reduce the system to a high-fidelity model.
Automated model reduction to SSMs from equations \cite{SSMTool} can successfully predict responses to small harmonic forcing \cite{ponsioen19,ponsioen20,jain21} and bifurcations of those responses \cite{li21a,li21b}, and has also been extended to constrained mechanical systems \cite{li22}.
Recently, Ref.~\cite{cenedese21} developed a data-driven method which identifies the SSM geometry and its reduced dynamics to trajectories in an observable space \cite{SSMLearn}.
This approach also transforms the SSM-reduced dynamics to a normal form, which describes the dynamics as sparsely as possible while maintaining essential nonlinearities \cite{guckenheimer83}.
SSM-based model reduction has since been applied to both numerical and experimental datasets in fluid and structural dynamics \cite{cenedese21b,kaszas22} and control \cite{alora22}.
Ref.~\cite{axas22} showed how to improve the computational efficiency of data-driven SSM identification through a simplified formulation of the algorithm.

Delay embedding is the method of reconstructing invariant sets by viewing a select number of measurements separated by a timelag as independent observables.
This method is routinely used to aid data-driven model identification in nonlinear dynamical systems. 
Examples of model reduction methods based on delay embedding include the extended dynamic mode decomposition (DMD) \cite{williams15,dylewsky22b}, the Hankel alternative view of the Koopman decomposition (HAVOK) \cite{brunton17}, the eigensystem realization algorithm (ERA) \cite{juang85}, closure modeling \cite{pan18}, and nearest-neighbor prediction \cite{pikovsky86,sugihara90}. 
In addition, delay embedding has been extensively employed in SSM-based model reduction from data \cite{cenedese21,cenedese21b}. 
For SSMs, a closer understanding of the delay embedding map improves fits to data and produces more accurate reduced-order models \cite{axas22}.
This has motivated our present study on how invariant manifolds can be efficiently and accurately reconstructed in delay coordinates.

The main driver behind the introduction of delay embedding as a tool in dynamical systems was the discovery that it could reconstruct strange attractors from scalar measurements of chaotic systems \cite{crutchfield79,packard80}. 
Floris Takens's celebrated embedding theorem \cite{takens81} and its later extension \cite{sauer91} show that, in principle, delay embedding recovers invariant sets from the full state space in a suitable observable space under generic assumptions.
In practice, however, the choice of the timelag and embedding dimension is critical to obtain robust models \cite{broomhead86,casdagli91,yap10}.
The many methods for choosing delay parameters for chaotic attractor reconstruction include minimization of the mutual information between subsequent samples \cite{fraser86}, minimization of false nearest neighbors \cite{kennel92,abarbanel93}, and a Monte Carlo decision tree search formulation \cite{kraemer22}.

Recent work has also explored the geometric structure of delay embedded invariant sets in an effort to improve model order reduction.
For periodic data, singular value decomposition (SVD) on the delay-embedded snapshot matrix has been shown to converge to a Fourier analysis \cite{bozzo10}.
The number of delays required to recover such periodic orbits equals the number of coefficients of the Fourier spectrum \cite{pan20}.
Fitting a linear map between subsequent snapshots of such a delay-embedded periodic orbit produces a companion matrix, whose eigenvectors are given by the inverse \van{}  \cite{rowley09,drmac19}.

Furthermore, connections to convolutional coordinates \cite{kamb18} and the Frenet-Serret frame \cite{hirsh21} have been made, and an interpretation of SVD modes in delay coordinates as principal component trajectories has been proposed \cite{dylewsky22}. 
For the special case of an observed signal composed of oscillating sinusoidal functions, the observable space contains invariant spaces determined by the signal frequencies \cite{axas22}.
Recently, it was shown that subsequent components of the DMD modes of delay-embedded linear systems are related by a multiplication of the corresponding eigenvalue \cite{bronstein22}.

In this work, we explore the local dynamics close to a fixed point of a nonlinear delay-embedded system.
We show that the linear part of the delay-embedded dynamics depends solely on the corresponding eigenvalues, and not on the observable function and the full state space eigenvectors.
In particular, the eigenvectors in the observable space are given by the columns of the Vandermonde matrix of the exponential of the eigenvalues multiplied by the timelag.
Unlike available previous work, we do not attempt a linearization of the nonlinear dynamics, nor do we restrict our attention to periodic orbits.
Instead, our results imply that the nonlinear delay-embedded system has an SSM whose tangent space coincides with the column space of this Vandermonde matrix.
We exploit this structure to aid the data-driven identification of SSMs in three mechanical examples. 
We believe that these results enhance the understanding of delay embedding in reduced-order modeling and also reveal new opportunities for SSM-based model reduction. 

The structure of this paper is the following. 
First, Sect.~\ref{sec:ssm} briefly introduces SSM theory and summarizes a method for fast SSM-based data-driven modeling. 
Sect.~\ref{sec:delay} outlines a new theory for delay-embedding tangent spaces of invariant manifolds and discusses their application to SSM-based model reduction.
In Sect.~\ref{sec:applications}, we use these results to identify SSMs in examples of a 2-degree-of-freedom oscillator, simulations of multimodal vibrations in a \vk{} beam, and experiments of complex behavior in a sloshing tank.
In Sect.~\ref{sec:conclusions}, we draw conclusions from these examples and discuss possible further extensions of our theory.
Finally, Appendix~\ref{sec:proofs} contains the proofs of the results presented in Sect.~\ref{sec:delay}.

\section{Model reduction to spectral submanifolds}\label{sec:ssm}
Here, we outline previous results on rigorous model order reduction to SSMs in smooth nonlinear systems.
We also summarize \mSSM{}, the algorithm we use here to identify SSMs from data.

\subsection{Spectral submanifold theory}\label{sec:ssmtheory}
Consider a nonlinear, autonomous dynamical system of class $\mathcal{C}^{l}$, $l\in \{\N^+,\ \infty,\ a\}$, where $a$ denotes analyticity, in the form
\begin{equation}\label{eq:fullsystem}
    \dot{\vct{\fullc}} = \fullmat\vct{\fullc} + \fnl(\vct{\fullc}), \quad \vct{\fullc} \in \R^\fulldim, \quad \fnl \sim \ordo{|\vct{\fullc}|^2}, \quad \fnl : \R^\fulldim\to \R^\fulldim.
\end{equation}

Let us denote the flow map of the system by $\fullflow^t(\vct{\fullc}_0) := \vct{\fullc}(t, \vct{\fullc}_0)$, with $\vct{\fullc}(t, \vct{\fullc}_0)$ denoting the trajectory of (\ref{eq:fullsystem}) starting from $\vct{\fullc}_0$ at time $0$.
We assume that $\fullmat\in \R^{\fulldim\times \fulldim}$ is diagonalizable and that the real parts of its eigenvalues are either all strictly negative or all strictly positive. 
We take $\ssmdim$ eigenvectors of $\fullmat$ and denote their span by $\specsub{}$, i.e., a $\ssmdim$-dimensional spectral subspace of $\R^{\fulldim}$. 
In this step, we often choose the $\ssmdim$ slowest eigendirections.

Provided that the $\ssmdim$ eigenvalues corresponding to $\specsub$ are non-resonant with the remaining $\fulldim-\ssmdim$ eigenvalues of $\fullmat$, the nonlinear system has a unique smoothest, invariant manifold $\mfd$ tangent to $\specsub$ at the origin, i.e., $T_{\vct 0}\mfd = \specsub$ \cite{cabre03}.
Following \cite{haller16}, we call $\mfd$ a spectral submanifold (SSM).
In case of a resonance between $\specsub$ and the rest of the spectrum of $\fullmat$, the $\ssmdim$-dimensional SSM does not exist in general, and we must then include the resonant modal subspace in $\specsub$ to obtain a higher-dimensional SSM.
If all eigenvalues of $\fullmat$ are stable, the slowest SSM attracts nearby trajectories, which makes it suitable for model order reduction.

The open-source numerical package \SSMT{} computes SSMs from arbitrary finite-dimensional nonlinear systems \cite{SSMTool,jain21}.
More recently, the \SSML{} package was developed to find SSMs in data from nonlinear dynamical systems \cite{cenedese21,cenedese21b}.
Here, we will apply the simplified data-driven SSM algorithm \mSSM{} introduced by Ref.~\cite{axas22}.

\subsection{Fast data-driven model order reduction to spectral submanifolds}\label{sec:fastssm}
The objective of dynamics-based machine learning is to reconstruct SSMs from data, and then use SSM-reduced models for predictions of the full system response \cite{cenedese21}.
Here, we use \mSSM{} \cite{fastSSM} to identify the SSM from snaphots of trajectories in an observable space.
The procedure consists of two steps: manifold geometry detection and normal form computation.
The summary below follows Ref.~\cite{axas22}, to which we refer for further details.
Whereas that reference differentiates between the algorithm for cubic polynomial approximations of two-dimensional SSMs and its extension to arbitrary order and dimension, here, we will simply refer to both algorithms as \mSSM{}.

The SSM is parametrized in the graph style, that is, we construct $\mfd$ as a graph over the spectral subspace $\specsub$.
The data consists of snapshots $\vct{\delc}(t_{\geni}) \in \R^\deldim$ in a $\deldim$-dimensional observable space.
For each trajectory we construct the snapshot matrix $\Delc \in \R^{\deldim\times \npoints}$ from $\npoints$ snapshots as
\begin{equation}
	\Delc = \left[\begin{array}{cccc}
	\vert & \vert & & \vert \\
	\vct{\delc}(t_1) & \vct{\delc}(t_2) & \ldots & \vct{\delc}(t_{\npoints}) \\
	\vert & \vert & & \vert \\
	\end{array}\right]
\end{equation}

Let $\tang\in \R^{\deldim\times \ssmdim}$ be a matrix whose columns approximately span the SSM tangent space.
In \mSSM{}, the standard procedure is to obtain $\tang$ through SVD on the snapshot matrix $\Delc$.
However, $\tang$ can also be prescribed if the tangent space is known a priori. 
Denoting by $(\cdot)^\dagger$ the Moore-Penrose pseudoinverse, we project each snapshot $\vct{\delc}_\geni$ onto this subspace to obtain $\ssmdim$-dimensional reduced coordinates $\vct \redc$ as
\begin{equation}\label{eq:proj}
    \vct \redc = \tang^\dagger \vct{\delc}.
\end{equation}
We write $\Redc \in \Ce^{\ssmdim\times \npoints}$ for the projection of the snapshot matrix onto the tangent space. 

Next, we seek to approximate the embedding of $\mfd$ as the graph of a multivariate polynomial of order $\ssmorder$ from the data:
\begin{equation}
\begin{aligned}
    \vct{\delc}(\vct \redc) &= \ssmmat\vct \redc^{1:\ssmorder}, \quad \ssmmat &= [\ssmmat_1, \ssmmat_2, \dots, \ssmmat_{\ssmorder}], \quad \ssmmat_{\geni} \in \R^{\deldim\times \ssmdim_{\geni}},
\end{aligned}
\end{equation}
where $\ssmdim_{\geni}$ is the number of $\ssmdim$-variate monomials at order $\geni$ and the superscript in $(\cdot)^{1:l}$ denotes a vector of all monomials from order $1$ up to $l$. 
We obtain the manifold parametrization coefficients $\ssmmat\in \R^{{\deldim}\times \ssmdim_{1:\ssmorder}}$ by a polynomial regression, which yields the solution
\begin{equation}\label{eq:geometric}
	\ssmmat = \Delc(\Redc^{1:\ssmorder})^\dagger.
\end{equation}

The reduced dynamics are approximated by another $\ordo{\redorder}$ polynomial regression, with a coefficient matrix $\diagredmat \in \Ce^{\ssmdim\times \ssmdim_{1:\redorder}}$, in the form
\begin{equation}\label{eq:reddyn}
    \dot{\vct \redc} \approx \diagredmat\vct \redc^{1:\redorder}, \quad \diagredmat = \dot{\Redc}(\Redc^{1:\redorder})^\dagger.
\end{equation}

Finally, we compute the normal form \cite{guckenheimer83} of the SSM-reduced dynamics up to order $\nforder$.
This amounts to a near-identity polynomial transformation with coefficients $\transmat \in \Ce^{\ssmdim\times \ssmdim_{1:\nforder}}$ from the new coordinates $\vct \nfc \in \Ce^\ssmdim$ such that
\begin{equation}\label{eq:generalnf}
\begin{aligned}
     \vct \redc &= \transmat\vct \nfc^{1:\nforder} = \vct \nfc + \transmat_{2:\nforder}\vct \nfc^{2:\nforder}, \\
    \dot{\vct \nfc} &= \nfmat\vct \nfc^{1:\nforder} = \vct \Lambda \vct \nfc + \nfmat_{2:\nforder}\vct \nfc^{2:\nforder}.
\end{aligned}
\end{equation}
The normal form and the reduced dynamics are conjugate dynamical systems.
Therefore, we substitute (\ref{eq:generalnf}) into (\ref{eq:reddyn}) to obtain
\begin{equation}\label{eq:conjugacy}
    \D_{\vct \nfc}(\transmat\vct \nfc^{1:\nforder})\nfmat\vct \nfc^{1:\nforder} = \diagredmat(\transmat\vct \nfc^{1:\nforder})^{1:\redorder}.
\end{equation}
The matrices $\transmat$ and $\nfmat$ are computed by solving (\ref{eq:conjugacy}) recursively at increasing orders with \SSMT{} \cite{jain21}.



This procedure requires that the training data lies sufficiently close to the SSM, which can be achieved by removing initial transients from the input signal, as identified by a spectral analysis on the training data \cite{cenedese21b}.
Since the SSM built over the slowest $\ssmdim$ modes is unique and attracting, this method ensures relevant training data.

\section{Delay-embedding the tangent spaces of invariant manifolds}\label{sec:delay}
Here, we show how tangent spaces of invariant manifolds at a fixed point can be analytically recovered when the observable space arises from delay embedding of a signal. 
We also describe how the recovered tangent spaces facilitate the reconstruction of spectral submanifolds in such observable spaces.

\subsection{Theoretical results}\label{sec:delaytheory}
For the dynamical system (\ref{eq:fullsystem}), we define a scalar observable $\obs(\vct{\fullc}(t))$, where $\obs : \R^\fulldim \to \R$ is a differentiable function that returns a measured feature of system (\ref{eq:fullsystem}), such as a displacement coordinate.
In order to reconstruct features of the full phase space from the observable, we use delay embedding. 
We stack $\deldim$ consecutive measurements separated by a timelag $\tau > 0$ to create an observable space of dimension $\deldim$.
This yields a trajectory in the form $\vct{\delc}(t) = \sampmap(\vct{\fullc}(t)) \in \R^{\deldim}$, where we define the sampling map
\begin{equation}\label{eq:delay}
	\sampmap : \R^n \to \R^p,
	\quad 
    \vct{\fullc} \mapsto \left[\begin{array}{c}
    \obs(\vct{\fullc}) \\
    \obs(\fullflow^\tau(\vct{\fullc})) \\
    \obs(\fullflow^{2\tau}(\vct{\fullc})) \\
    \vdots \\
    \obs(\fullflow^{(p-1)\tau}(\vct{\fullc}))
    \end{array}\right].
\end{equation}

An important question is how invariant sets of system (\ref{eq:fullsystem}) in $\R^\fulldim$ are reproduced in the observable space $\R^\deldim$.
In particular, when the full state space trajectory $\vct{\fullc}(t)$ resides on a $\ssmdim$-dimensional invariant manifold $\mfd$, will $\vct{\delc}(t)$ also do so?
Takens's embedding theorem gives an affirmative answer.
It states that if $\obs$ is generic and no small integer multiple of $\tau$ coincides with the period of any possible periodic orbit of \eqref{eq:fullsystem} lying in $\mfd$, then for
\begin{equation}
    \deldim \ge 2d+1,
\end{equation}
the manifold $\mfd$ will have a diffeomorphic copy $\mfdo$ in $\R^{\deldim}$ via the mapping \eqref{eq:delay} \cite{takens81}. 
Whereas Takens's theorem was formulated only for scalar observable functions, this result has since been extended to multi-dimensional $\vct \obs$ as long as the total observable space dimension exceeds $2d$ \cite{deyle11}.

Both the nonlinear geometry and dynamics of $\mfd$ and the observable function influence the geometry of $\mfdo$. 
It is therefore difficult to predict its geometry for a general flow map. 
Around the fixed point $\delp = \sampmap(\vct 0)\in\R^\deldim$, however, the $\ordo{1}$ expansion of $\mfdo$, i.e., its tangent space $T_{\delp}\mfdo$, can be directly determined, as we will show next.
Note that since the flow map is the identity at the origin, $\delp$ lies on the diagonal in the observable space, with each of its identical components given by $\obs(\vct 0)$.

We start by rewriting (\ref{eq:fullsystem}) in modal coordinates:
\begin{equation}\label{eq:modsys}
    \dot{\vct \modc} = \modf(\vct \modc) = \vct \Lambda \vct \modc + \Eigv^{-1} \fnl(\Eigv \vct \modc),
\end{equation}
where $\Eigv = [\vct{\eigv}_1,\dots,\vct{\eigv}_\fulldim]$ contains the eigenvectors of $\fullmat$ and $\vct \Lambda = \diag(\lambda_1, \dots, \lambda_\fulldim)$ the corresponding eigenvalues, which we assume to be distinct.
We define modal coordinates $\vct \modc \in \Ce^\fulldim$ by letting $\vct \modc = \Eigv^{-1} \vct{\fullc}$.
Whereas the observable function is defined as a function of $\vct \fullc$, it is notationally convenient to define it as a function of $\vct \modc$, as $\obs(\vct \fullc) = \obs(\Eigv \vct\modc)$.

Let $\mfd$ be a $\ssmdim$-dimensional invariant manifold of \eqref{eq:fullsystem} intersecting the origin $\vct 0 \in \R^{\fulldim}$, where it is tangent to a set of $\ssmdim$ eigenvectors $\eigv_{1}, \eigv_{2}, \ldots, \eigv_{\ssmdim}$ of $\fullmat$ with corresponding eigenvalues $\lambda_1,\ldots,\lambda_\ssmdim$.
We define the \van{} $\vct{\Van}\in \Ce^{\deldim\times\ssmdim}$ of the $\ssmdim$ eigenvalues governing the linearized dynamics on $\mfd$ as $\Van_{\deli\modi} = \e^{\lambda_{\modi} \deli\tau}$, i.e.,
\begin{equation}\label{eq:vandermonde}
	\vct{\Van} = \left[\begin{array}{cccc} 
	1 & 1 & \dots & 1 \\
	\e^{\lambda_1 \tau} & \e^{\lambda_2 \tau} & \dots & \e^{\lambda_\ssmdim \tau} \\
	\e^{2\lambda_1 \tau} & \e^{2\lambda_2 \tau} & \dots & \e^{2\lambda_\ssmdim \tau} \\
	\vdots & \vdots & \ddots & \vdots \\
	\e^{(\deldim-1)\lambda_1 \tau} & \e^{(\deldim-1)\lambda_2 \tau} & \dots & \e^{(\deldim-1)\lambda_\ssmdim \tau}
	\end{array}\right].
\end{equation}

\begin{figure}[h]
	\centering
	\includegraphics[width=\linewidth]{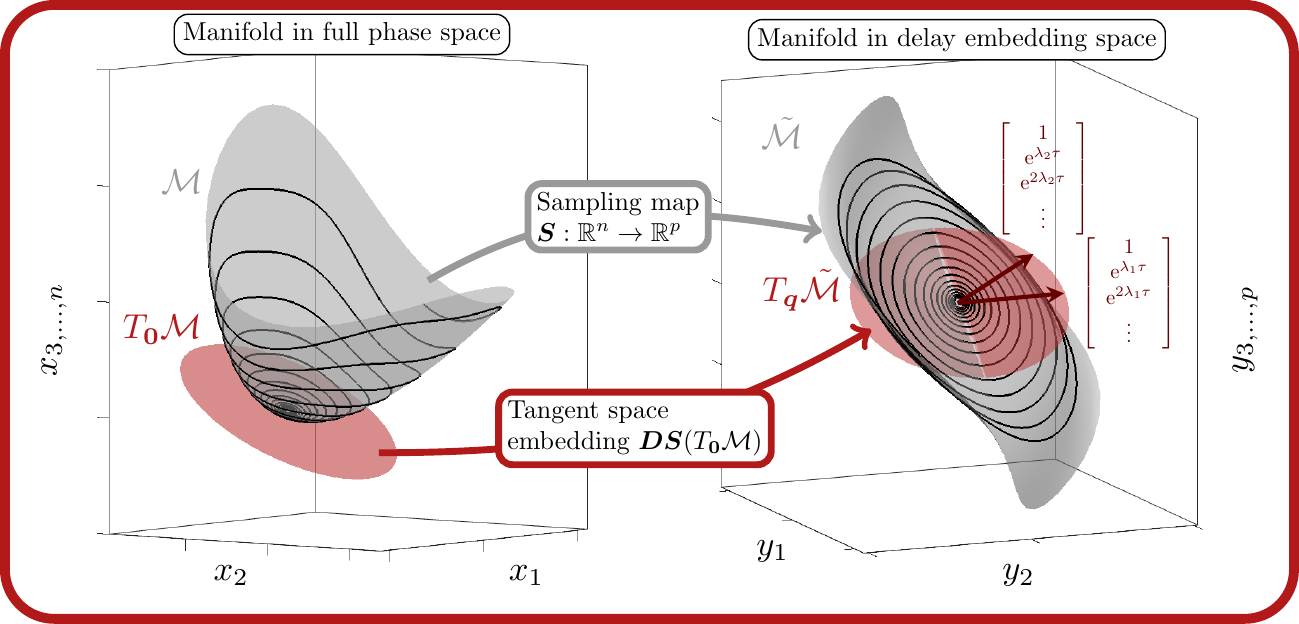}
	\caption{Delay embedding of the tangent space $T_{\vct 0}\mfd$ of an invariant manifold $\mfd$. The full state space manifold $\mfd$ (left) has a diffeomorphic copy $\mfdo$ in the observable space (right) by Takens's theorem. The shape of the reconstructed manifold $\mfdo$ depends on the flow map, but its tangent space, $T_{\delp}\mfdo$, is directly given by the eigenvalues at the fixed point, independent of the geometry of $\mfd$ and the observable function $\obs$.}\label{fig:ssmdelay}
\end{figure}

\begin{theorem}\label{thm:tangent}
Under the assumptions of a generic observable function $\obs: \R^n\to \R$ and distinct eigenvalues $\lambda_1 \ne \ldots \ne \lambda_\ssmdim$, the tangent space of the observable manifold $\mfdo$ at the fixed point can be written
\begin{equation}\label{eq:tangent}
	T_{\delp}\mfdo
	= \range{\vct{\Van}}.
\end{equation}
\end{theorem}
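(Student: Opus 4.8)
The plan is to compute the tangent space $T_{\delp}\mfdo$ directly as the image under $D\sampmap(\vct 0)$ of the tangent space $T_{\vct 0}\mfd = \spann\{\eigv_1,\ldots,\eigv_\ssmdim\}$, and then show that this image equals $\range{\vct\Van}$. First I would note that, since $\mfdo = \sampmap(\mfd)$ by Takens's theorem (using the genericity of $\obs$ and the non-resonance of $\tau$ with periodic orbits in $\mfd$, plus the embedding dimension assumption $\deldim \ge 2d+1$), the chain rule gives $T_{\delp}\mfdo = D\sampmap(\vct 0)\, T_{\vct 0}\mfd$. So the core of the proof is to evaluate the Jacobian of the sampling map \eqref{eq:delay} at the fixed point and apply it to an eigenvector.

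The key computation is the $\deli$-th block row of $D\sampmap(\vct 0)$, which is $D(\obs \circ \fullflow^{\deli\tau})(\vct 0) = D\obs(\vct 0)\, D\fullflow^{\deli\tau}(\vct 0)$ by the chain rule. Since $\vct 0$ is a fixed point of \eqref{eq:fullsystem}, the variational equation gives $D\fullflow^{\deli\tau}(\vct 0) = \e^{\fullmat\, \deli\tau}$. Applying this to an eigenvector $\eigv_\modi$ with eigenvalue $\lambda_\modi$ yields $\e^{\fullmat\, \deli\tau}\eigv_\modi = \e^{\lambda_\modi \deli\tau}\eigv_\modi$, so the $\deli$-th component of $D\sampmap(\vct 0)\eigv_\modi$ is $\e^{\lambda_\modi \deli\tau}\, D\obs(\vct 0)\eigv_\modi$. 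Writing $c_\modi := D\obs(\vct 0)\eigv_\modi$, a scalar, we get $D\sampmap(\vct 0)\eigv_\modi = c_\modi\, (1, \e^{\lambda_\modi\tau}, \e^{2\lambda_\modi\tau}, \ldots, \e^{(\deldim-1)\lambda_\modi\tau})^\top = c_\modi\, \vct\Van_{\cdot\,\modi}$, the $\modi$-th column of the Vandermonde matrix \eqref{eq:vandermonde}. Hence $T_{\delp}\mfdo = \spann\{c_1\vct\Van_{\cdot\,1},\ldots,c_\ssmdim\vct\Van_{\cdot\,\ssmdim}\}$.

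It remains to show that none of the scalars $c_\modi$ vanishes and that the scaled columns still span $\range{\vct\Van}$; equivalently, that $\dim T_{\delp}\mfdo = \ssmdim$. This is where the genericity of $\obs$ and the distinctness of the eigenvalues enter. Distinctness of $\lambda_1,\ldots,\lambda_\ssmdim$ guarantees that the $\ssmdim$ Vandermonde columns are linearly independent (a Vandermonde matrix built from distinct nodes $\e^{\lambda_\modi\tau}$ — distinct because $\tau$ is small enough to avoid the aliasing $\lambda_\modi\tau - \lambda_j\tau \in 2\pi i\,\Z$ — has full column rank for $\deldim \ge \ssmdim$). Genericity of $\obs$ ensures $c_\modi = D\obs(\vct 0)\eigv_\modi \ne 0$ for each $\modi$: the set of linear functionals annihilating a fixed nonzero vector is a measure-zero hyperplane, and this is precisely the kind of non-degeneracy Takens's theorem already presupposes. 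Given $c_\modi\ne0$ and linear independence of the unscaled columns, the scaled set $\{c_\modi\vct\Van_{\cdot\,\modi}\}$ is again a basis of $\range{\vct\Van}$, yielding \eqref{eq:tangent}.

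The main obstacle I anticipate is not any single calculation but making the genericity argument airtight and consistent with the hypotheses: one must argue that "generic $\obs$" as used in Takens's theorem (or its multidimensional extension \cite{deyle11}) indeed forces $D\obs(\vct 0)\eigv_\modi\ne 0$, and one must be careful that the diffeomorphism $\sampmap|_\mfd$ onto $\mfdo$ has full-rank differential at $\vct 0$ so that the chain-rule identity $T_{\delp}\mfdo = D\sampmap(\vct 0)T_{\vct 0}\mfd$ is valid. In fact the two conditions are the same: $\sampmap|_\mfd$ is an immersion at $\vct 0$ precisely when the $\ssmdim\times\ssmdim$ matrix with columns $c_\modi\vct\Van_{\cdot\,\modi}$ (restricted to $\ssmdim$ rows) is nonsingular, which loops back to $c_\modi\ne0$ and Vandermonde non-degeneracy. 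The remaining steps — the variational equation, the eigenvector action of the matrix exponential, and the Vandermonde rank fact — are routine.
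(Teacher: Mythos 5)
Your proposal is correct and follows essentially the same route as the paper's own proof: chain rule on the sampling map, the variational identity $\D\fullflow^{\deli\tau}(\vct 0)=\e^{\fullmat \deli\tau}$, the eigenvector action producing the Vandermonde columns scaled by $c_\modi=\D\obs(\vct 0)\eigv_\modi$ (which is exactly the paper's $\partial\hat\obs/\partial \modc_\modi|_{\vct 0}$ in modal coordinates), and the genericity condition $c_\modi\ne 0$ stated in Remark~\ref{thm:obsrank}. The only difference is cosmetic --- you apply the Jacobian to the eigenvectors directly rather than passing to modal coordinates first --- and your explicit caveat about aliasing ($\tau(\lambda_\modi-\lambda_\deli)\notin 2\pi i\,\mathbb{Z}$) for the Vandermonde rank is a welcome extra precision.
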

\begin{proof}
See Appendix \ref{sec:tangentproof}.
\end{proof}

This result is illustrated in Fig.~\ref{fig:ssmdelay}.
Note that the observable function must have full rank, as spelled out in the following remark. 
\begin{remark}\label{thm:obsrank}
For \eqref{eq:tangent} to hold, we must have $\frac{\partial \obs}{\partial \modc_{\modi}}|_{\vct 0} \ne 0$ $\forall \modi\in \{1,\ldots,\ssmdim\}$, which defines the genericity of $\obs$. 
If the gradient of the observable function is orthogonal to any of the eigenvectors $\eigv_{1}, \ldots, \eigv_{\ssmdim}$, the sampling map $\sampmap$ will not be an embedding of $\mfd$.
\end{remark}
This should be kept in mind particularly when dealing with symmetries of engineering structures, as we will show in our examples below. 

\begin{theorem}\label{thm:diag}
The columns of $\vct{\Van}$ are eigenvectors of the linearized delay-embedded system at the fixed point. 
Indeed, the dynamics in the observable space can be written
\begin{equation}
	\dot{\vct{\delc}} = \vct{\Van} \vct\Lambda \vct{\Van}^{\dagger}(\vct{\delc}-\delp) + \lilordo{|\vct{\delc}-\delp|}
\end{equation}
\end{theorem}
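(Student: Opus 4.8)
\emph{The plan} is to compute directly the linear part at $\delp$ of the vector field that the delay map $\sampmap$ of \eqref{eq:delay} induces on the reconstructed manifold $\mfdo$, by propagating a linearized trajectory of \eqref{eq:fullsystem} on $\mfd$ through $\sampmap$. I would work in the modal coordinates of \eqref{eq:modsys}. Since $\mfd$ is tangent to $\spann\{\eigv_{1},\dots,\eigv_{\ssmdim}\}$ at the origin, its reduced dynamics are $\dot\modc_{\modi} = \lambda_{\modi}\modc_{\modi} + \ordo{|\vct\modc|^2}$ for $\modi = 1,\dots,\ssmdim$, so along $\mfd$ one has $\modc_{\modi}(t) = \e^{\lambda_{\modi}t}\modc_{\modi}(0) + \ordo{|\vct\modc(0)|^2}$, and $\D\fullflow^{\deli\tau}|_{\vct 0}$ acts on the $\modi$-th modal direction by multiplication with $\e^{\lambda_{\modi}\deli\tau}$. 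Applying $\sampmap$ to such a trajectory and Taylor-expanding each component $\obs\circ\fullflow^{\deli\tau}$ about $\vct 0$ --- the same first-order computation that underlies Theorem~\ref{thm:tangent} --- gives $\delc_{\deli} - \obs(\vct 0) = \sum_{\modi=1}^{\ssmdim}\frac{\partial\obs}{\partial\modc_{\modi}}\big|_{\vct 0}\,\e^{\lambda_{\modi}\deli\tau}\,\modc_{\modi}(t) + \ordo{|\vct\modc|^2}$ for $\deli = 0,\dots,\deldim-1$, that is,
\begin{equation}\label{eq:diagplan1}
	\vct\delc(t) - \delp = \vct{\Van}\,\vct c(t) + \ordo{|\vct c(t)|^2}, \qquad c_{\modi}(t) := \tfrac{\partial\obs}{\partial\modc_{\modi}}\big|_{\vct 0}\,\modc_{\modi}(t),
\end{equation}
with $\vct{\Van}$ the \van{} of \eqref{eq:vandermonde}. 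The coefficients $\partial\obs/\partial\modc_{\modi}|_{\vct 0}$ are nonzero by Remark~\ref{thm:obsrank}, and $\vct{\Van}$ has full column rank by Theorem~\ref{thm:tangent}; hence $\vct{\Van}^{\dagger}\vct{\Van} = \vct I_{\ssmdim}$ and the diagonal matrix $\vct B := \diag(\partial\obs/\partial\modc_{1}|_{\vct 0},\dots,\partial\obs/\partial\modc_{\ssmdim}|_{\vct 0})$ is invertible.

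\emph{Finishing the linearization.} Differentiating \eqref{eq:diagplan1} in time and using $\dot c_{\modi} = \lambda_{\modi}c_{\modi} + \ordo{|\vct c|^2}$ gives $\dot{\vct\delc} = \vct{\Van}\vct\Lambda\vct c + \ordo{|\vct c|^2}$, where here $\vct\Lambda = \diag(\lambda_{1},\dots,\lambda_{\ssmdim})$ is the $\ssmdim\times\ssmdim$ block of the modal matrix in \eqref{eq:modsys} governing the dynamics on $\mfd$. Applying $\vct{\Van}^{\dagger}$ to \eqref{eq:diagplan1} yields $\vct c = \vct{\Van}^{\dagger}(\vct\delc - \delp) + \ordo{|\vct\delc - \delp|^2}$, since $\vct{\Van}^{\dagger}\vct{\Van} = \vct I_{\ssmdim}$, the pseudoinverse maps the quadratic remainder to a term of its own order, and $|\vct c|$ is comparable to $|\vct\delc - \delp|$ because $\vct{\Van}$ has full column rank. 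Substituting back produces the claimed expansion $\dot{\vct\delc} = \vct{\Van}\vct\Lambda\vct{\Van}^{\dagger}(\vct\delc - \delp) + \lilordo{|\vct\delc - \delp|}$. The eigenvector statement is then immediate: $\vct{\Van}^{\dagger}\vct{\Van}_{\cdot\modi}$ is the $\modi$-th column of $\vct{\Van}^{\dagger}\vct{\Van} = \vct I_{\ssmdim}$, so $\vct{\Van}\vct\Lambda\vct{\Van}^{\dagger}\vct{\Van}_{\cdot\modi} = \lambda_{\modi}\vct{\Van}_{\cdot\modi}$ by diagonality of $\vct\Lambda$, i.e.\ the $\modi$-th column of $\vct{\Van}$ is an eigenvector of the linearized delay-embedded system with eigenvalue $\lambda_{\modi}$.

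\emph{Where the care is needed.} The main obstacle is controlling the remainder rigorously: one must know that $\dot{\vct\delc}$ is a well-defined vector field on $\mfdo$, that the neglected contributions --- higher-order terms of $\obs\circ\fullflow^{\deli\tau}$, curvature of $\mfdo$ off its tangent space, and the $\ordo{|\vct\modc|^2}$ nonlinearity of the reduced flow --- are genuinely $\ordo{|\vct\modc|^2}$ (so that differentiating \eqref{eq:diagplan1} along the flow preserves the order), and that $|\vct\modc|$ and $|\vct\delc - \delp|$ are comparable near the fixed point. All three follow from $\sampmap|_{\mfd}:\mfd\to\mfdo$ being a diffeomorphism (Takens's theorem and its multivariable extension \cite{takens81,deyle11}) with differential $\vct{\Van}\vct B$ at $\vct 0$, a linear isomorphism onto $T_{\delp}\mfdo$. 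In fact this makes available a cleaner route that bypasses the bookkeeping: differentiating the conjugacy $\sampmap\circ\fullflow^{t} = \ssmflow^{t}\circ\sampmap$ on $\mfd$ at $\vct 0$ (with $\ssmflow^{t}$ the induced flow on $\mfdo$) gives $\D\ssmflow^{t}|_{\delp} = (\vct{\Van}\vct B)\,\e^{\vct\Lambda t}\,(\vct{\Van}\vct B)^{\dagger} = \vct{\Van}\,\e^{\vct\Lambda t}\,\vct{\Van}^{\dagger}$ on $T_{\delp}\mfdo$ --- the diagonal matrices $\vct B$ and $\e^{\vct\Lambda t}$ commute and the factors $\vct B$, $\vct B^{-1}$ cancel --- and differentiating in $t$ at $t=0$ recovers the linear part $\vct{\Van}\vct\Lambda\vct{\Van}^{\dagger}$, from which the eigenvector claim follows as above.
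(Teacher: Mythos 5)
Your proposal is correct and follows essentially the same route as the paper's proof: both linearize the conjugacy induced by the delay embedding at the fixed point and exploit the fact that the diagonal matrix $\diag\bigl(\partial\obs/\partial\vct\modc|_{\vct 0}\bigr)$ commutes with the diagonal $\vct\Lambda$ (or $\e^{\vct\Lambda t}$) and cancels against its inverse, leaving $\vct{\Van}\vct\Lambda\vct{\Van}^{\dagger}$; the paper works directly with the pushed-forward vector field $\delf=\D\vct\Psi\circ\modf\circ\vct\Psi^{-1}$, while your ``cleaner route'' differentiates the flow-map conjugacy in $t$ at $t=0$, which is the same computation. The points you flag as needing care (full column rank of $\vct{\Van}$, well-definedness of $\D\vct\Psi^{-1}(\delp)$, validity of the pseudoinverse factorization) are exactly the ones the paper addresses.
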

\begin{proof}
See Appendix \ref{sec:diagproof}.
\end{proof}
\begin{corollary}\label{thm:noobs}
In the observable space $\R^\deldim$, the timelag $\tau$ and the eigenvalues $\lambda_{\modi}$ fully determine the tangent space and the linear part of the dynamics. 
In particular, the linear dynamics are independent of both the full eigenvectors and the observable function. 
\end{corollary}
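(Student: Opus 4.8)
The plan is to read off Corollary~\ref{thm:noobs} directly from Theorems~\ref{thm:tangent} and~\ref{thm:diag}, since in both statements the objects describing the linear geometry and dynamics in $\R^\deldim$ are expressed purely through $\tau$ and $\lambda_1,\ldots,\lambda_\ssmdim$. For the tangent space, Theorem~\ref{thm:tangent} gives $T_{\delp}\mfdo = \range\vct{\Van}$, and by the definition~\eqref{eq:vandermonde} the matrix $\vct{\Van}$ has entries $\Van_{\deli\modi} = \e^{\lambda_\modi \deli\tau}$; hence $\vct{\Van}$, and therefore its column space, is a function of $(\tau,\lambda_1,\ldots,\lambda_\ssmdim)$ alone. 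Crucially, neither the full-space eigenvector matrix $\Eigv$ nor the observable $\obs$ appears in this expression: any dependence on $\obs$ present before the range is taken (the nonzero scalars $\frac{\partial \obs}{\partial \modc_{\modi}}|_{\vct 0}$ from Remark~\ref{thm:obsrank}) only rescales the columns of $\vct{\Van}$ and so leaves $\range\vct{\Van}$ unchanged. This settles the claim for the tangent space.

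For the linear part of the dynamics, Theorem~\ref{thm:diag} writes the linearization of the delay-embedded vector field at $\delp$ as $\vct{\Van}\vct\Lambda\vct{\Van}^{\dagger}$. Here $\vct\Lambda = \diag(\lambda_1,\ldots,\lambda_\ssmdim)$ is built from the eigenvalues, $\vct{\Van}$ from $\tau$ and the eigenvalues as just noted, and the Moore--Penrose pseudoinverse $\vct{\Van}^{\dagger}$ is uniquely determined by $\vct{\Van}$. Hence the entire linear operator is a function of $(\tau,\lambda_1,\ldots,\lambda_\ssmdim)$ only, independent of $\Eigv$ and $\obs$, which is the second claim.

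The one place that warrants care --- and the closest thing to an obstacle here --- is the requirement that $\vct{\Van}^{\dagger}$ behave as a genuine function of $\vct{\Van}$ with $\range\vct{\Van}$ of the expected dimension $\ssmdim$; this needs $\vct{\Van}$ to have full column rank, equivalently $\vct{\Van}^{\dagger} = (\vct{\Van}^{*}\vct{\Van})^{-1}\vct{\Van}^{*}$. A Vandermonde matrix with $\deldim \ge \ssmdim$ rows has full column rank exactly when its nodes $\e^{\lambda_1\tau},\ldots,\e^{\lambda_\ssmdim\tau}$ are pairwise distinct, i.e.\ when $\e^{\lambda_\modi\tau}\ne\e^{\lambda_{\modi'}\tau}$ for $\modi\ne\modi'$. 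This is a mild genericity condition on the timelag $\tau$ relative to the differences of the eigenvalues --- precisely the near-fixed-point counterpart of Takens's non-resonance requirement on $\tau$ already assumed in Theorems~\ref{thm:tangent} and~\ref{thm:diag}. Under it, the corollary follows with no additional computation.
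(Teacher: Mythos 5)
Your proposal is correct and follows essentially the same route as the paper, which treats the corollary as an immediate consequence of Theorems~\ref{thm:tangent} and~\ref{thm:diag} (the concluding paragraph of the proof of Theorem~\ref{thm:diag} in Appendix~\ref{sec:diagproof} states exactly this deduction). Your added observation that full column rank of $\vct{\Van}$ really requires the \emph{nodes} $\e^{\lambda_\modi\tau}$ to be pairwise distinct --- not merely the eigenvalues themselves --- is slightly more careful than the paper, which asserts full rank of $\vct{\Van}$ directly from distinctness of the $\lambda_\modi$.
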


In the following, we will demonstrate how this structure can be exploited for parametrizing spectral submanifolds from data, when the corresponding eigenvalues are approximately known.

Finally, when the observable function is multi-dimensional, the tangent space is influenced by the relative dependency of each component $\obs_{\obsi}$ of the observable function on each modal coordinate $\modc_{\modi}$.

\begin{theorem}\label{thm:multidim}
For a multidimensional observable $\vct \obs : \R^\fulldim \to \R^\obsdim$ with components $\obs_1,\ldots,\obs_\obsdim$, the tangent space $T_{\delp}\mfdo \subset \R^{\deldim\obsdim}$ can be expressed as
\begin{equation}\label{eq:multidimtangent}
	T_{\delp}\mfdo = 
	\range
	\left[\begin{array}{c}
	\vct{\Van}
	\diag\left(\left.\frac{\partial \obs_1}{\partial \vct \modc}\right|_{\vct 0}\right) \\ 
	\vdots \\ 
	\vct{\Van}
	\diag\left(\left.\frac{\partial \obs_{\obsdim}}{\partial \vct \modc}\right|_{\vct 0}\right)
	\end{array}\right].
\end{equation}
\end{theorem}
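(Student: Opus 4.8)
The plan is to reduce Theorem~\ref{thm:multidim} to a componentwise application of the argument underlying Theorem~\ref{thm:tangent}, using that the multidimensional delay-embedding map is a vertical concatenation of $\obsdim$ scalar delay-embedding maps. I would write $\sampmap = (\sampmap_1^\top,\ldots,\sampmap_\obsdim^\top)^\top$, where
$\sampmap_\obsi(\vct\fullc) = \bigl(\obs_\obsi(\vct\fullc),\,\obs_\obsi(\fullflow^\tau(\vct\fullc)),\,\ldots,\,\obs_\obsi(\fullflow^{(\deldim-1)\tau}(\vct\fullc))\bigr)^\top \in \R^\deldim$
is the scalar delay embedding built from the $\obsi$-th component of $\vct\obs$, so that the block structure of $\sampmap$ matches the block structure of \eqref{eq:multidimtangent}. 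Since $\deldim\obsdim > 2\ssmdim$, the vector-valued extension of Takens's theorem \cite{deyle11} guarantees that $\sampmap|_{\mfd}$ is an embedding; hence $\mfdo = \sampmap(\mfd)$ is a $\ssmdim$-dimensional manifold and $T_\delp\mfdo$ equals the image of $T_{\vct 0}\mfd$ under the differential $\D\sampmap|_{\vct 0}$. It then only remains to compute that differential along $\mfd$.

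For this, I would pass to the modal coordinates $\vct\modc = \Eigv^{-1}\vct\fullc$ of \eqref{eq:modsys}, in which the flow map linearizes as $\D\fullflow^{\deli\tau}|_{\vct 0} = \Eigv\,\e^{\deli\tau\vct\Lambda}\,\Eigv^{-1}$, so that the first-order Taylor expansion of the $\deli$-th delayed copy of $\obs_\obsi$ reads
\[
\obs_\obsi\!\left(\fullflow^{\deli\tau}(\vct\fullc)\right) - \obs_\obsi(\vct 0) = \sum_{\modi=1}^{\fulldim} \left.\frac{\partial \obs_\obsi}{\partial \modc_\modi}\right|_{\vct 0}\e^{\lambda_\modi \deli\tau}\,\modc_\modi + \lilordo{|\vct\modc|},
\]
with $\frac{\partial\obs_\obsi}{\partial\vct\modc}|_{\vct 0} = \D\obs_\obsi|_{\vct 0}\Eigv$. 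Because $\mfd$ is tangent to $\spann(\eigv_1,\ldots,\eigv_\ssmdim)$ at the origin, it is locally a graph over that spectral subspace: the master modal coordinates $(\modc_1,\ldots,\modc_\ssmdim)$ form a chart on $\mfd$ and the slave coordinates are $\ordo{|\vct\modc|^2}$. Substituting this graph removes every term with $\modi>\ssmdim$ at linear order, so the derivative of $\sampmap_\obsi$ along $\mfd$, expressed in that chart, is the $\deldim\times\ssmdim$ matrix with entries $\e^{\lambda_\modi\deli\tau}\frac{\partial\obs_\obsi}{\partial\modc_\modi}|_{\vct 0} = \Van_{\deli\modi}\frac{\partial\obs_\obsi}{\partial\modc_\modi}|_{\vct 0}$, that is, $\Van\,\diag\!\bigl(\frac{\partial\obs_\obsi}{\partial\vct\modc}|_{\vct 0}\bigr)$. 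Stacking these $\obsdim$ blocks in the order of the components of $\sampmap$ gives $\D\sampmap|_{\vct 0}\,T_{\vct 0}\mfd$ as precisely the matrix in \eqref{eq:multidimtangent}, and taking its range yields \eqref{eq:multidimtangent}.

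The hard part will be the bookkeeping, not the ideas. First, one has to justify rigorously that the slave modal coordinates drop out at linear order; this is the graph/tangency step inherited from the proof of Theorem~\ref{thm:tangent} and follows from $T_{\vct 0}\mfd = \spann(\eigv_1,\ldots,\eigv_\ssmdim)$ together with $\fnl = \ordo{|\vct\fullc|^2}$, but it must be spelled out carefully. Second, the $\modc_\modi$ and the derivatives $\frac{\partial\obs_\obsi}{\partial\modc_\modi}|_{\vct 0}$ occur in conjugate pairs (since $\fullmat$ and the $\obs_\obsi$ are real), so \eqref{eq:multidimtangent} should be read as a complexified representation of the real $\ssmdim$-dimensional subspace $T_\delp\mfdo\subset\R^{\deldim\obsdim}$; equivalently, each conjugate pair of columns can be replaced by its real and imaginary parts. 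Finally, I would record the genericity needed for the stacked matrix to have full column rank $\ssmdim$: for every master mode $\modi$ there must be at least one component with $\frac{\partial\obs_\obsi}{\partial\modc_\modi}|_{\vct 0}\ne 0$, the natural vector-valued analogue of Remark~\ref{thm:obsrank}, which holds for generic $\vct\obs$.
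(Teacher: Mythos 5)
Your proposal is correct and follows essentially the same route as the paper: decompose the multidimensional delay-embedding map into $\obsdim$ scalar delay-embedding blocks, compute each block's differential at the origin in modal coordinates exactly as in the proof of Theorem~\ref{thm:tangent} to obtain $\vct{\Van}\diag\bigl(\frac{\partial\obs_\obsi}{\partial\vct\modc}|_{\vct 0}\bigr)$, and stack. Your added care about the slave coordinates dropping out at linear order, the conjugate-pair/real-form reading, and the rank/genericity condition is sound and, if anything, more explicit than the paper's own argument.
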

\begin{proof}
See Appendix \ref{sec:multivarproof}.
\end{proof}

When the observable function is a set of displacements, the linearized multi-dimensional observable function $\left.\frac{\partial \vct\obs}{\partial \vct \modc}\right|_{\vct 0}$ corresponds to the mode shapes of the system in terms of those displacements.
Therefore, if the mode shapes and eigenvalues of the observed system are known, we can directly compute the tangent space of $\mfdo$.
In the special case of a scalar observable, the tangent space is independent of the observable function and we do not need any information about the mode shapes.


\subsection{Delay-embedded spectral submanifold reconstruction}\label{sec:picktimelag}
These theoretical results can be exploited as a constraint to aid SSM identification from data.
In the case of a scalar signal and with the eigenvalues of interest $\lambda_1,\ldots,\lambda_\ssmdim$ approximately known, we select the matrix representation of the tangent space $\tang$ appearing in \eqref{eq:proj} as the \van{} (\ref{eq:vandermonde}), i.e.,
\begin{equation}
	\tang := \vct\Van.
\end{equation}

\begin{figure}[h]
	\centering
	\includegraphics[width=\linewidth]{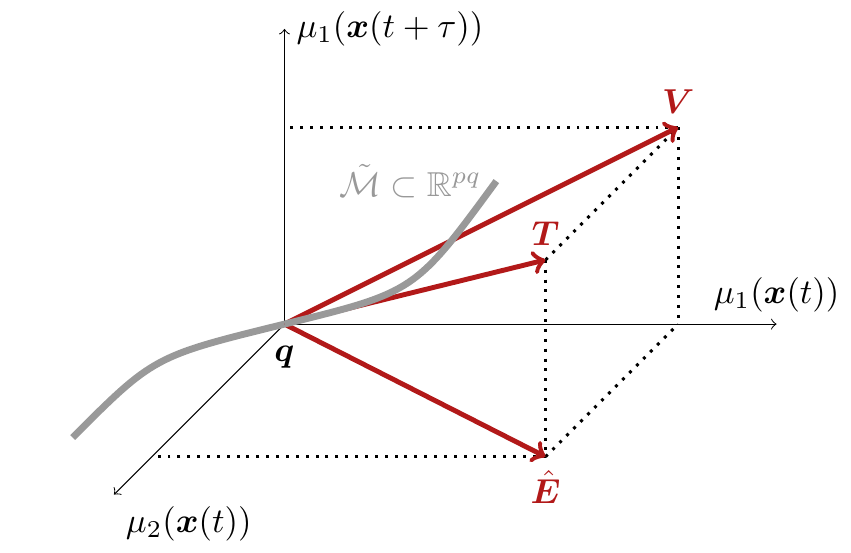}
	\caption{The tangent space $T_{\delp}\mfdo$ of the delay-embedded manifold $\mfdo$ for a $\obsdim$-dimensional observable function $\vct\obs$ is the range of the matrix $\tang$, defined as the columnwise Kronecker product of the \van{} $\vct\Van$ and the mode shapes $\hat{\Eigv}$ in terms of the observable.}\label{fig:multidim}
\end{figure}

We have seen that the gradient of a multi-dimensional observable function $\vct\obs$ enters the expression for the tangent space \eqref{eq:multidimtangent}. 
While an expression for this gradient is typically not available in experiments, mode shapes $\hat{\Eigv}=[\hat{\eigv}_1,\ldots,\hat{\eigv}_\ssmdim] \in \R^{\obsdim \times \ssmdim}$ are often known from theory or obtained experimentally.
Here, each mode shape $\hat{\eigv}_\modi \in \R^\obsdim$ describes how the eigenvector $\eigv_\modi \in \R^\fulldim$ is observed.
Specifically, they are related by
\begin{equation}\label{eq:c}
	\hat{\eigv}_\modi = c_\modi\left.\frac{\partial \vct\obs}{\partial \vct\fullc}\right|_{\vct 0}\eigv_\modi,
\end{equation}
where $c_\modi \in \Ce$ is a nonzero constant that only rescales the eigenvectors.
We select $\tang$ as the columnwise Kronecker product of the \van{} and the observable mode shapes, i.e., 
\begin{equation}\label{eq:multidimeig}
	\tang := \left[\begin{array}{c}
	\vct{\Van}
	\diag\left(\hat{\eigv}_1\right) \\ 
	\vdots \\ 
	\vct{\Van}
	\diag\left(\hat{\eigv}_\ssmdim\right)
	\end{array}\right].
\end{equation}
A sketch of the geometry is shown in Fig.~\ref{fig:multidim}.
In the case of unknown mode shapes, it may be possible to first project low-amplitude data onto the delay-embedded eigenvectors and then extract the observable mode shapes via SVD, although we do not explore this idea further in this work.

Prescribing $\tang$ and projecting the data onto its columns yields modal reduced coordinates.
This diagonalization of the system simplifies the learning of the geometry and the reduced dynamics of the SSM via the algorithm outlined in Sect. \ref{sec:fastssm}. 

Choosing proper delay-embedding parameters to reconstruct nonlinear systems can be a challenge. 
For the linear part of the system, however, our results suggest picking the timelag $\tau$ and embedding dimensionality $\deldim$ so as to obtain numerically favorable reduced coordinates along the SSM. 
We ideally want the columns of the \van{} \eqref{eq:vandermonde} to be orthogonal in order to maximize the signal-to-noise ratio in each of the observed modes.
To this end, we formulate a minimization problem,
\begin{equation}\label{eq:delaycost}
	(\lag^\star,\deldim^\star) = \mathop{\mathrm{argmin}}_{\lag,\deldim\in\N^+} \left\|\vct{\Van}(\lag\Delta t,\deldim)^\top\vct{\Van}(\lag\Delta t,\deldim) - \vct I\right\|_\mathrm{F},
\end{equation}
in which the columns of $\vct{\Van}$ are normalized and $\|\cdot\|_\mathrm{F}$ denotes the Frobenius norm.
Since the timelag is an integer multiple of the sampling timestep, $\tau=\lag\Delta t$, \eqref{eq:delaycost} defines an optimization over a set of discrete variables which can be solved simply by brute force. 

Bearing in mind the nonlinear part of the system, however, an optimal choice of delay parameters is not as straightforward. 
Increasing the timelag and embedding dimension tends to curve the SSM, requiring higher orders of approximation and in extreme cases folding the manifold, so that it can no longer be parametrized as a graph.
Taking into account the nonlinear part of the SSM reconstruction, therefore, we typically want the total delay embedding to be as small as possible.
While solving \eqref{eq:delaycost} gives some guidance, a suitable choice of $\tau$ and $\deldim$ will also depend on the nonlinearity of the system in the data range and the amount of signal noise.

\section{Applications}\label{sec:applications}
We now apply our method to three datasets: two from simulations and one from experiments. 
The eigenvalues in these examples are known either from theory or simulations.
We infer the delay-embedded tangent space accordingly before parametrizing the SSM.
The examples include an oscillator chain, a clamped-clamped beam and tank sloshing.

\subsection{Two-degree-of-freedom oscillator with nonlinear springs}
As our first example, we consider an oscillator chain of two masses, both attached with linear springs to each other and to the ground.
In addition, the spring connecting the left mass to the ground has a quadratic softening nonlinearity and the spring connecting the masses is of cubic hardening type. 
The masses and linear spring stiffnesses are set to 1, the softening parameter is $-2$ and the hardening parameter is $1$.
Each of the springs also has a linear damping coefficient of $0.03$.
Fig.~\ref{fig:oscsetup} shows the configuration.
The sampling time is $\Delta t=0.1$~s.
\begin{figure}[h]
	\centering
	\subfloat[\label{fig:oscsetup}]{\includegraphics[width=0.33\linewidth]{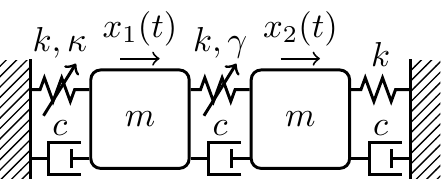}}
	\subfloat[\label{fig:oscssmfull}]{\includegraphics[width=0.33\linewidth]{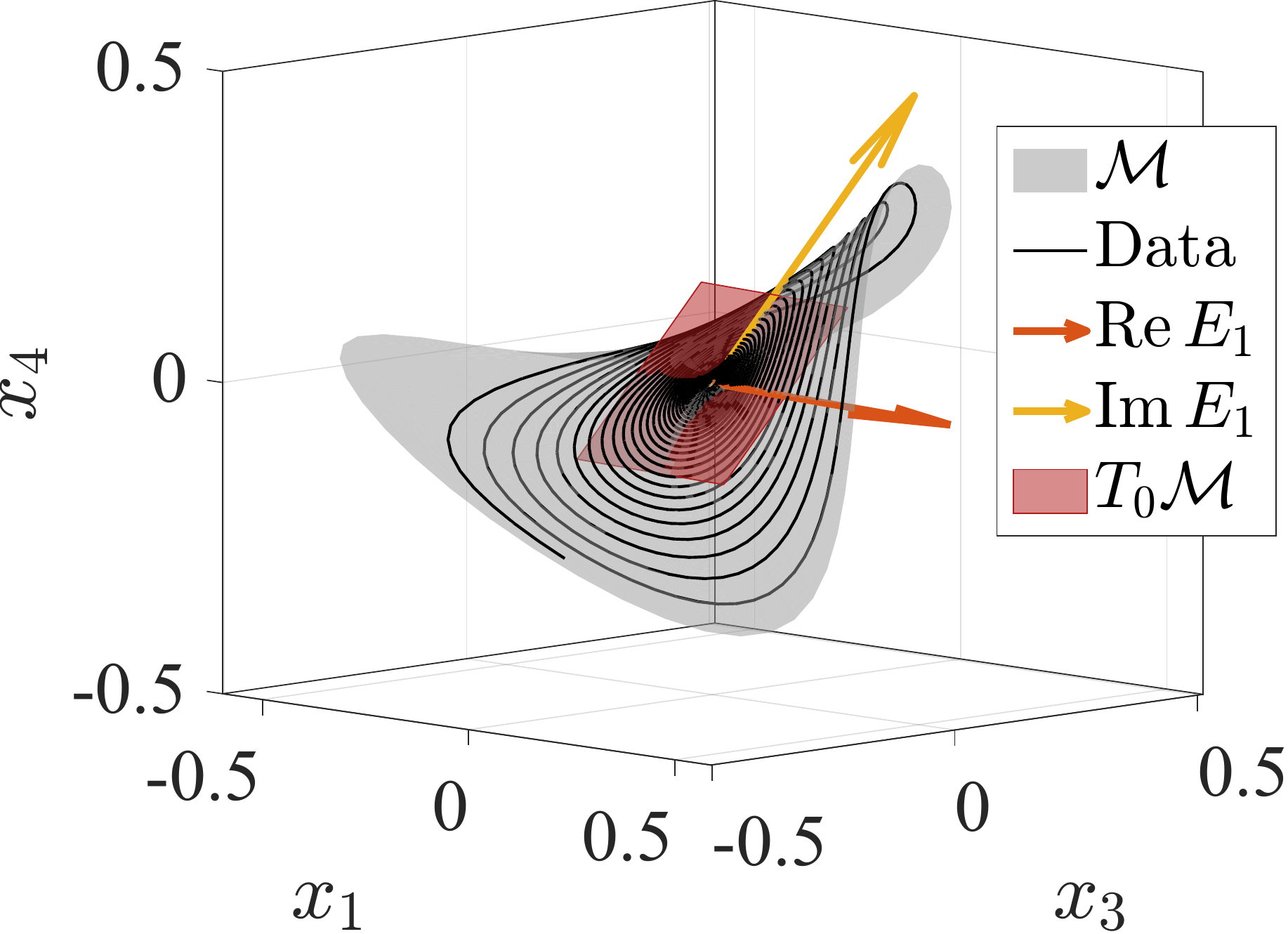}}
	\subfloat[\label{fig:oscssmdelay15}]{\includegraphics[width=0.33\linewidth]{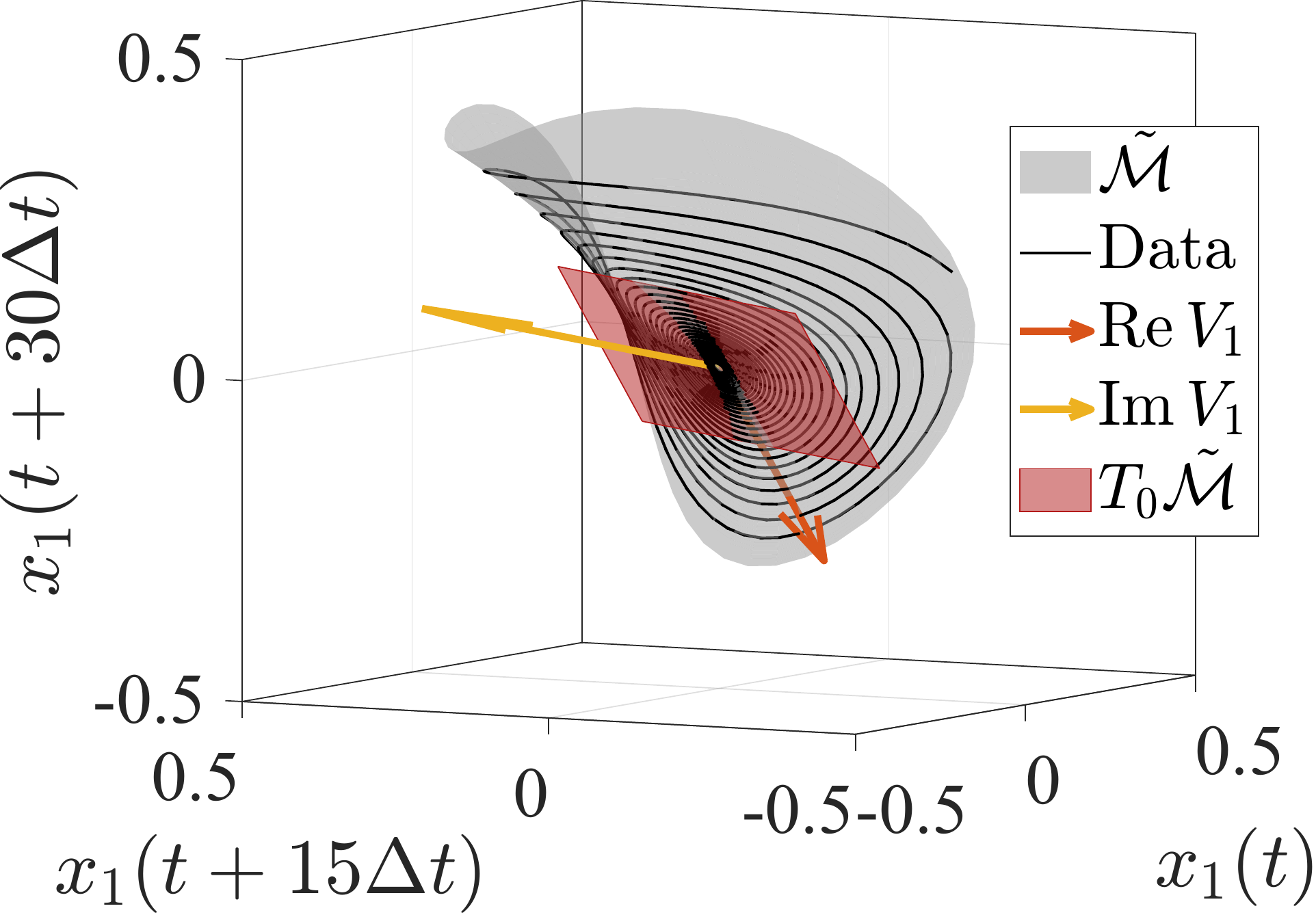}}
	\caption{(a) Setup for the two-degree-of-freedom oscillator example with two nonlinear springs. (b) The slow 2D SSM (gray) in the full state space, along with its tangent space (red). (c) The delay-embedded SSM in the observable space.}
\end{figure}

We compute an initial condition on the slow 2D SSM for the single training trajectory using \SSMT{}. 
Our observable function is the first mass displacement, $\obs(\vct{\fullc}) = \fullc_1$.
The trajectory in the full phase space and the SSM are shown in Fig.~\ref{fig:oscssmfull}.
The first two eigenvectors span the tangent space of the SSM.

Next, we delay embed the trajectory with a timelag $\tau=15\Delta t$ and embedding dimension $\deldim=5$, and seek the 2D SSM in this observable space using \mSSM{}.
We obtain reduced coordinates by projection of the trajectory data onto the columns of the \van{} $\vct{\Van}$ as predicted by our theory.
Fig.~\ref{fig:oscssmdelay15} shows the SSM in the first three coordinates of the observable space. 
Indeed, the tangent space of this observable space is identical to the column space of $\vct{\Van}$.
\begin{figure}[h]
	\centering
	\subfloat[\label{fig:oscssmdelayx2}]{\includegraphics[width=0.33\linewidth]{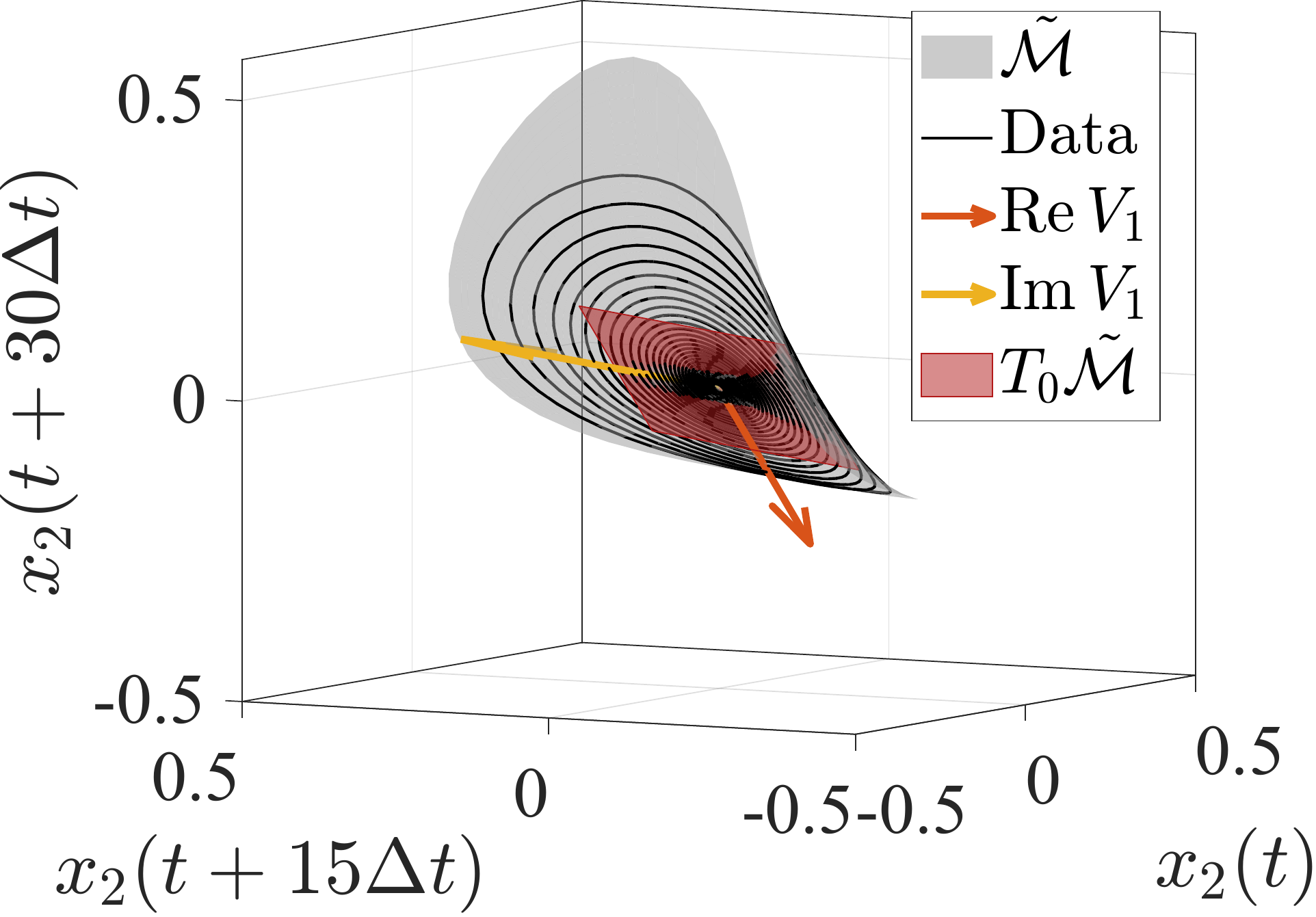}}
	\subfloat[\label{fig:oscssmdelayx3plusx4}]{\includegraphics[width=0.33\linewidth]{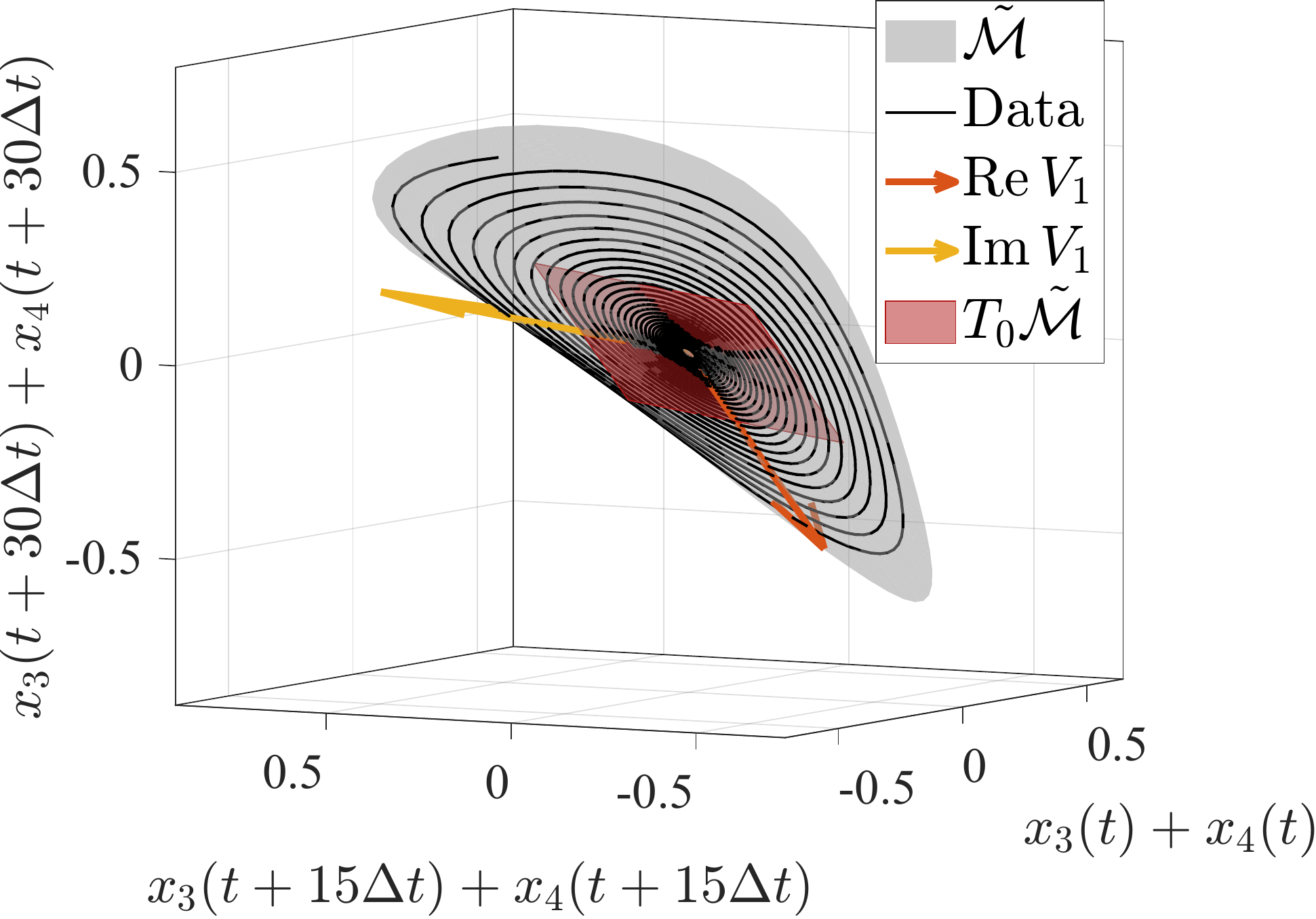}}
	\subfloat[\label{fig:oscssmdelayx2-x1}]{\includegraphics[width=0.33\linewidth]{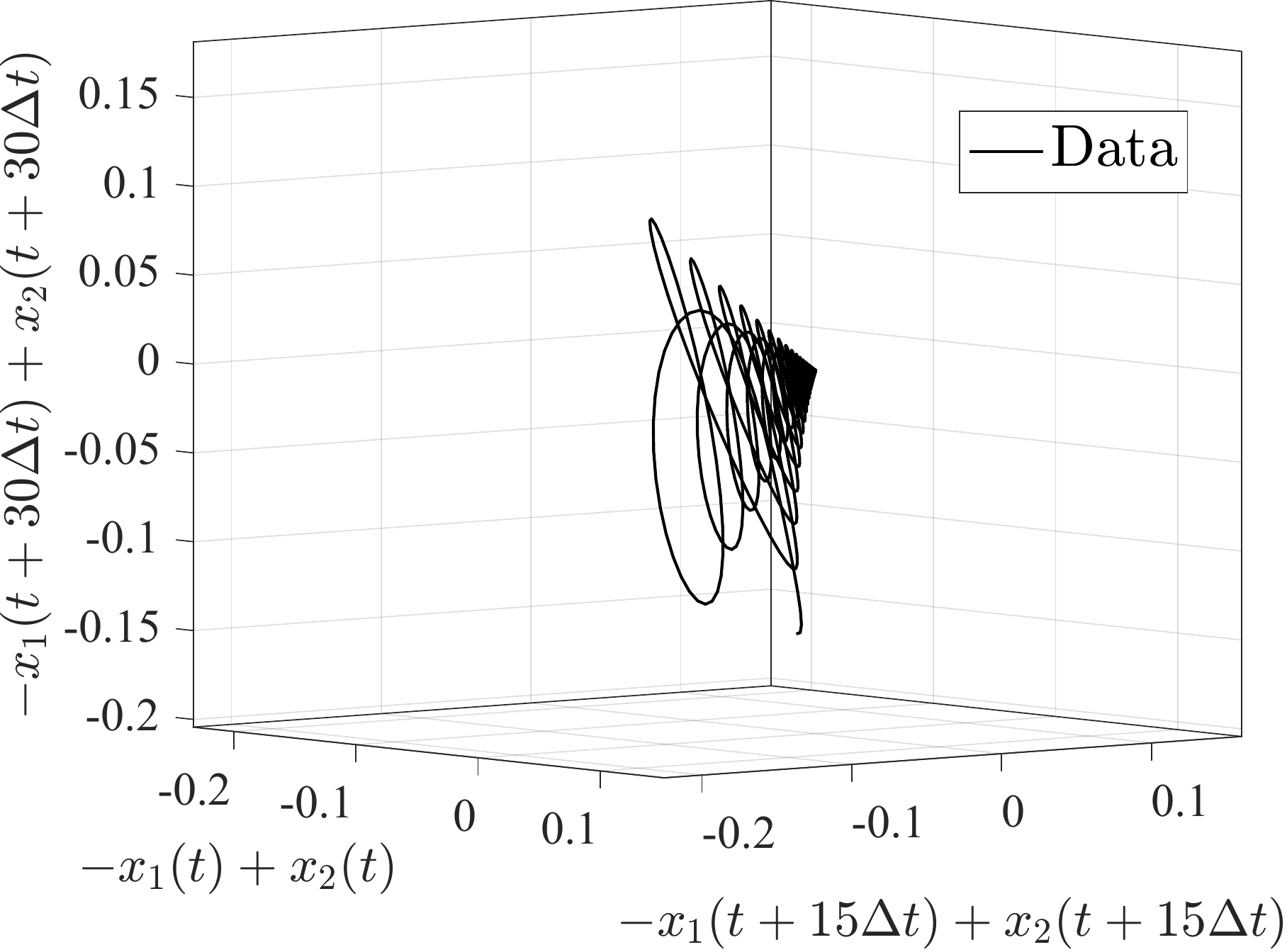}}
	\caption{(a,b) Changing the observable function leads to different SSM geometries, but the tangent space remains the same as in Fig.~\ref{fig:oscssmdelay15}. (c) A nongeneric observable function however, observing only the second mode, does not embed the manifold.}
\end{figure}

Corollary \ref{thm:noobs} predicts that this tangent space will be independent of the observable function, provided that it is generic.
To illustrate this, we plot the delay-embedded SSM for various observable functions, $\obs(\vct{\fullc}) = \fullc_1$ (Fig.~\ref{fig:oscssmdelay15}), $\obs(\vct{\fullc}) = \fullc_2$ (Fig.~\ref{fig:oscssmdelayx2}), and $\obs(\vct{\fullc}) = \dot \fullc_1+\dot \fullc_2$ (Fig.~\ref{fig:oscssmdelayx3plusx4}).
These different observable functions clearly produce different SSM geometries, but the eigenvectors and tangent spaces of the manifolds all agree. 

One exception is when we observe the distance between the masses, $\obs(\vct{\fullc}) = \fullc_2-\fullc_1$ (Fig.~\ref{fig:oscssmdelayx2-x1}).
In this case, the delay-embedded trajectory no longer lies on an invariant manifold, as is evident by the nonsmooth cusp in the data at the origin.
The reason is that this observable is non-generic precisely in the sense of our theory; it coincides with the mode shape of the second, fast mode of the full system.
This means that the observable function acts orthogonally to the slow SSM at the fixed point and thus the delay mapping is not an embedding, by Remark \ref{thm:obsrank}.

Next, we pick $\obs(\vct{\fullc}) = \fullc_2$ and use \mSSM{} to approximate the cubic order reduced dynamics on the SSM from the data. 
Computing the normal form yields 
\begin{equation}
	\left[\begin{array}{c} \dot{\rho}_{1} \\ \dot{\theta}_{1} \\  \end{array}\right]=\left[\begin{array}{c} -0.0014\,{\rho _{1}}^3-0.0148\,\rho _{1}\\ 1.0025-0.0919\,{\rho _{1}}^2 \end{array}\right].
\end{equation}
The trajectory projected onto the columns of $\vct{\Van}$ is shown in Fig.~\ref{fig:oscmodal}.
Integrating the obtained normal form and mapping back to the observable space yields a good reconstruction of the training data.
\begin{figure}[h]
	\centering
	\subfloat[\label{fig:oscmodal}]{\includegraphics[width=0.33\linewidth]{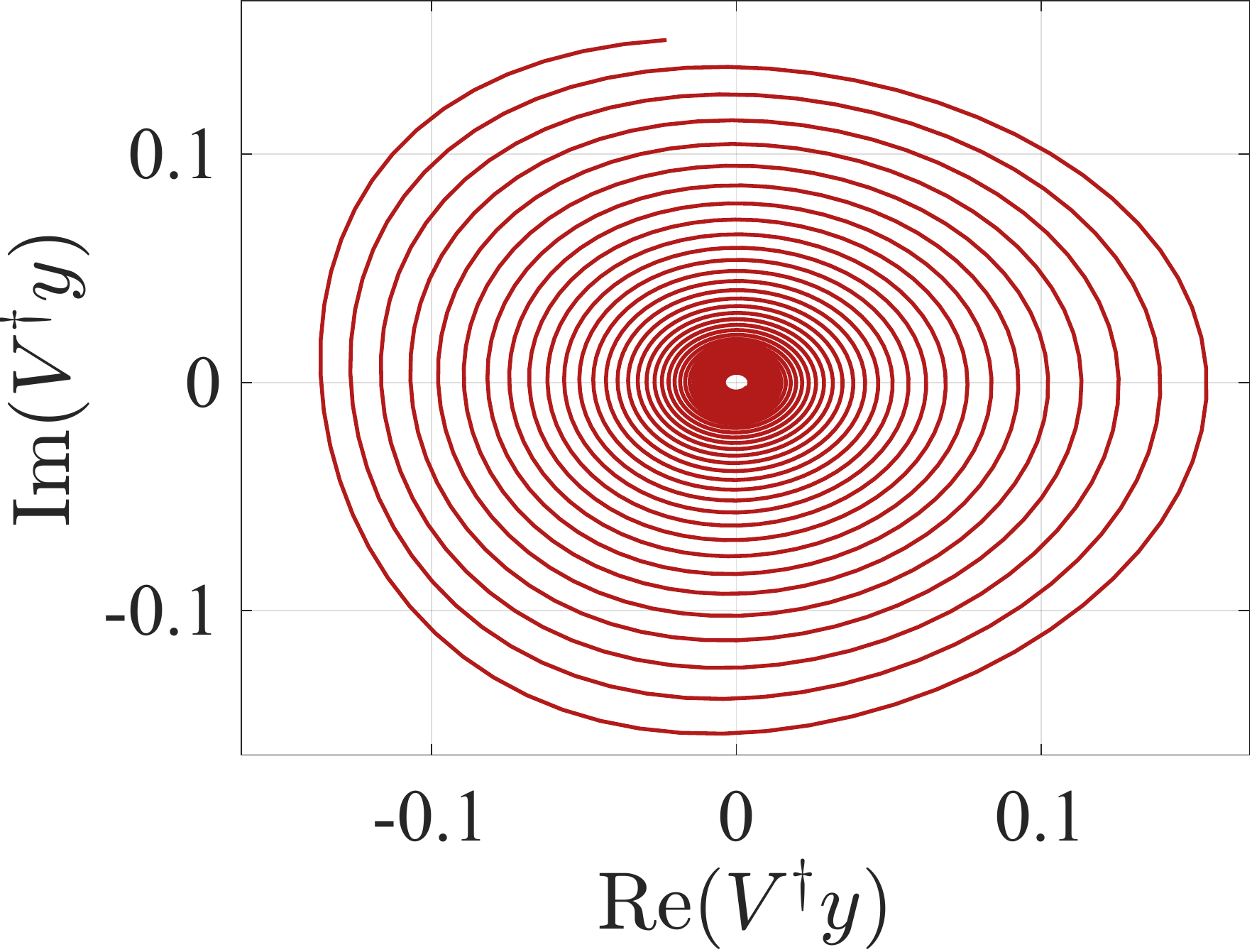}}
	\subfloat[\label{fig:oscdecay}]{\includegraphics[width=0.33\linewidth]{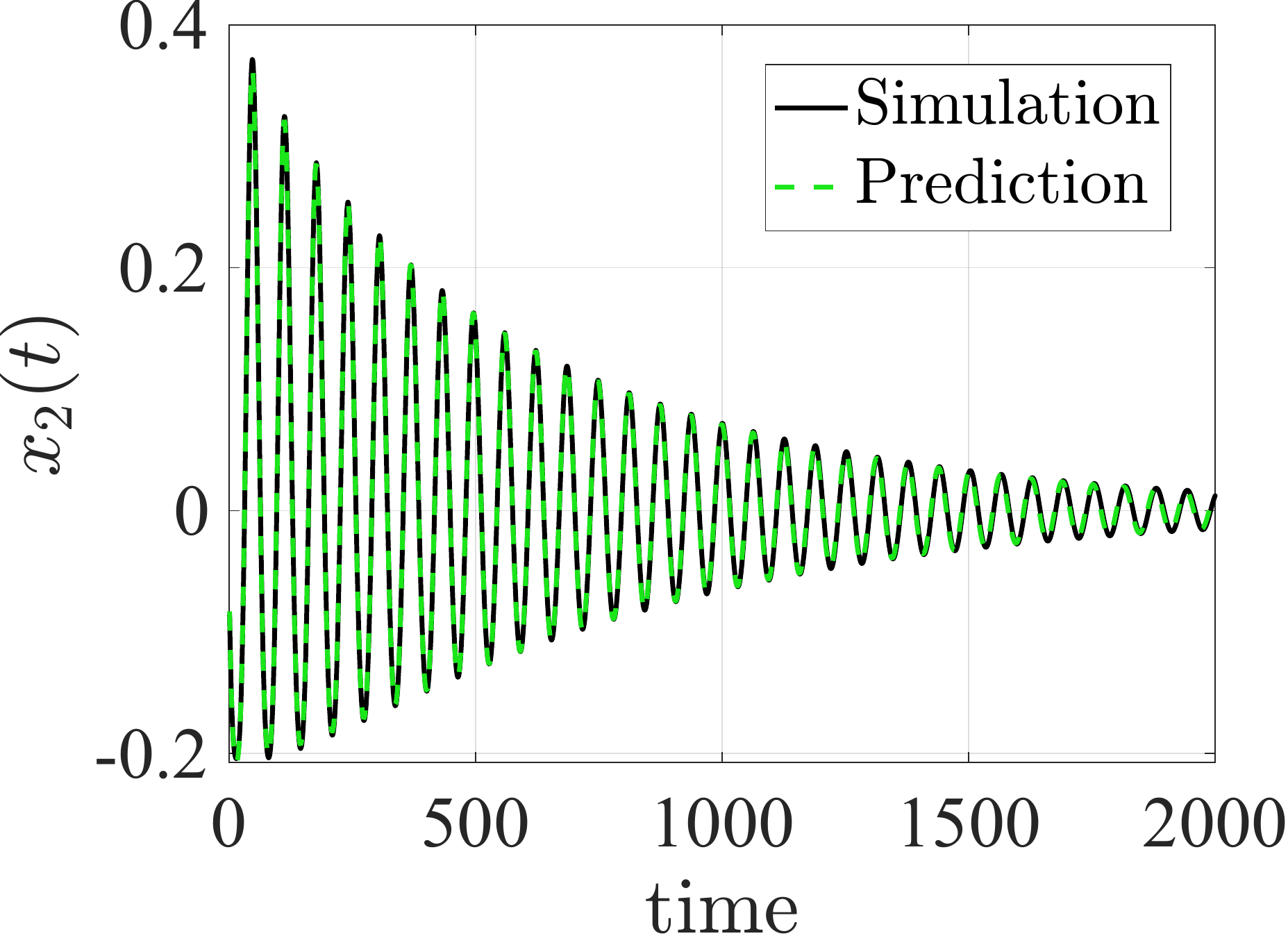}}
	\subfloat[\label{fig:oscssmdelayx1andx2}]{\includegraphics[width=0.33\linewidth]{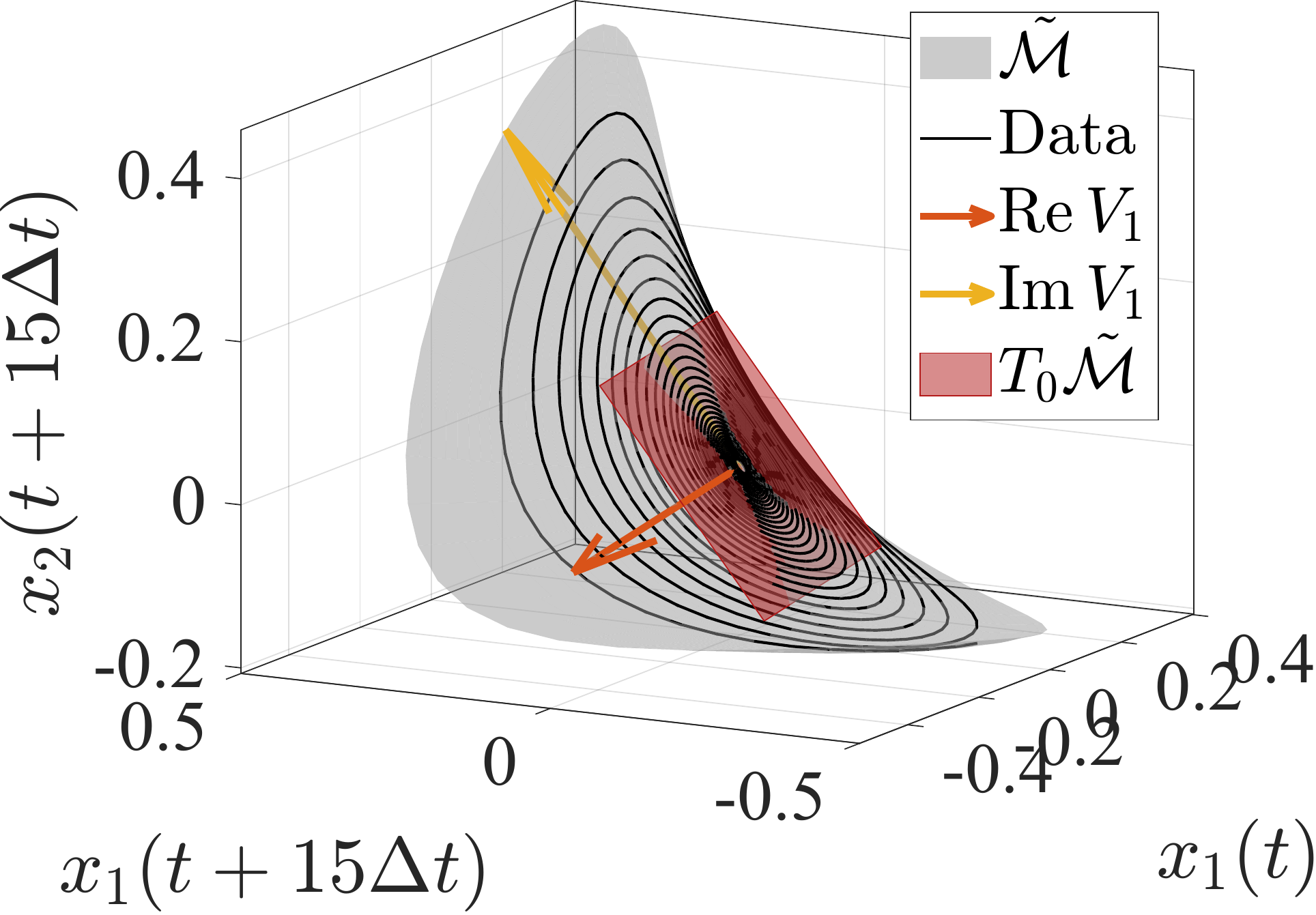}}
	\caption{(a) Projection of the data onto the delay-embedded tangent space predicted by our theory. (b) \mSSM{} predicts a model that successfully reconstructs the decay of the trajectory. (c) A view of the SSM in a delay-embedded space from a multi-dimensional observable, with the tangent space predicted by our theory.}\label{fig:osc}
\end{figure}

Finally, following Theorem \ref{thm:multidim}, we demonstrate how to determine the tangent space of the SSM at the fixed point when the observable is a vector. 
When we choose $\vct \obs(\vct{\fullc}) = [\fullc_1,\fullc_2]^\top$, unlike for a scalar observable function, the tangent space orientation is influenced not only by the eigenvalues, but also by the shape of the first mode.
This first mode shape corresponds to the masses moving in unison, i.e.
\begin{equation}\label{eq:oscmodeshapes}
	\hat{\eigv}_1 = \hat{\eigv}_2 = \left[\begin{array}{c} 1\\ 1 \end{array}\right].
\end{equation}
Then, by (\ref{eq:multidimeig}), we obtain vectors spanning the tangent space as the columns of the matrix
\begin{equation}
	\tang = \left[\begin{array}{cc}
	\vct{\Van} \diag(\hat{\eigv}_1) \\ 
	\vct{\Van} \diag(\hat{\eigv}_2)
	\end{array}\right]
	=
	\left[\begin{array}{cc}
	\vct{\Van} \\ 
	\vct{\Van}
	\end{array}\right],
\end{equation}
where $\vct{\Van}$ is the \van{} \eqref{eq:vandermonde}.
A view of the SSM and its tangent space in this 10-dimensional observable space is shown in Fig.~\ref{fig:oscssmdelayx1andx2}.

The relation of this mode shape to the observable function is given by \eqref{eq:c}.
In particular, we can compute the derivative of the observable function with respect to the modal coordinates as
\begin{equation}
	\left[\begin{array}{cc}
	\frac{\partial \obs_1}{\partial \modc_1}(\vct 0) & \frac{\partial \obs_1}{\partial \modc_2}(\vct 0) \\ 
	\frac{\partial \obs_2}{\partial \modc_1}(\vct 0) & \frac{\partial \obs_2}{\partial \modc_2}(\vct 0)
	\end{array}\right]
	= 
	\left[\begin{array}{cc}
	c_1 & c_2 \\ c_1 & c_2
	\end{array}\right],
\end{equation}
where $c_1,c_2 \in \Ce$ are nonzero constants depending on the scaling of the eigenvectors.
For simplicity, in \eqref{eq:oscmodeshapes} we chose $c_1=c_2=1$, such that $\tang$ is the \van{} vertically stacked twice.

\subsection{6D SSM in a nonlinear finite-element model of a beam}
We train an SSM-reduced model with data from numerical simulations of a finite-element (FE) representation of a clamped-clamped \vk{} nonlinear beam \cite{jain18}.
This example was previously studied in Refs.~\cite{cenedese21,axas22}, which identified the slowest 2D SSM in the delay-embedded observable space, predicted the forced response and analyzed the radius of convergence of the analytical normal form.
Here, thanks to our results on the tangent spaces of delay-embedded SSMs, we can extend the analysis to the six-dimensional SSM emanating from the three slowest modes of the linear part of the system.

Each node in the FE model has three degrees of freedom: axial deformation $u$, transverse deflection $w$, and rotation $w'$. 
The \vk{} axial strain is given by
\begin{equation}
    \epsilon_{11} = u'(x) + \frac{1}{2}\left(w'(x)\right)^2 - zw''(x).
\end{equation}
The axial stress is given by
\begin{equation}
    \sigma = E\epsilon_{11} + c \dot{\epsilon}_{11},
\end{equation}
where $E=70$ GPa denotes the Young's modulus and $c=1.0\times 10^6$ $\mathrm{Pa\cdot s}$ the material rate of viscous damping.
Based on a convergence analysis, we set the number of elements to 12, resulting in a 33-degree of freedom mechanical system, i.e., a 66-dimensional phase space. 
We set the beam length to 1000 mm, width 50 mm, and thickness 20 mm. 
The sampling time is $\Delta t = 0.0955$ s. 

\begin{figure}[h]
	\centering
	\subfloat{\includegraphics[width=0.5\linewidth]{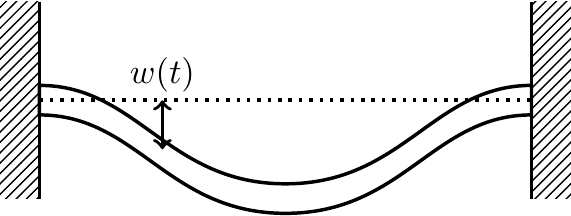}}\\
	\subfloat{\includegraphics[width=0.5\linewidth]{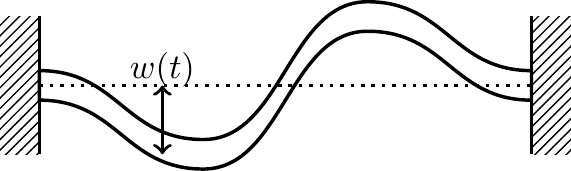}}\\
	\subfloat{\includegraphics[width=0.5\linewidth]{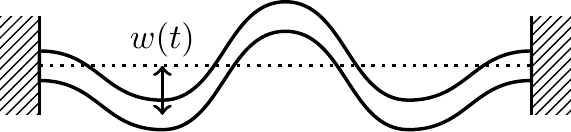}}
	\caption{\vk{} beam: schematic first, second, and third mode shapes. The scalar observable must be generic in the sense that it must have contributions from all modes of interest. For instance, the midpoint displacement is not excited by the second mode. Instead, we choose the shown transverse displacement at $1/4$ of the beam length.}\label{fig:vk}
\end{figure}

By Remark \ref{thm:noobs}, the observable function $\obs$ must have significant contributions from all modes $\modc_\modi$ that we wish to model.
For example, the midpoint displacement chosen as observable function in Refs.~\cite{cenedese21,axas22} was sufficient to model the 2D SSM, but cannot be employed for higher-dimensional SSMs. 
This is because the antisymmetric shape of the second mode has zero displacement at the midpoint (see Fig.~\autoref{fig:vk}), i.e.
\begin{equation}
	\frac{\partial \obs}{\partial \modc_3}(\vct 0) = \frac{\partial \obs}{\partial \modc_4}(\vct 0) = 0.
\end{equation}
Instead, we choose the transverse displacement of the beam at one fourth of the total length, $\obs =w(l/4)$, as this degree of freedom has nonzero contributions from all three mode shapes.

For our data-driven modeling objectives, we need training data containing the first three modes.
To generate initial conditions for such trajectories, we use linear combinations of the mode shapes of the system computed from its linear part.
Since the SSM is normally attracting, these trajectories will quickly approach it and we can use them to train our reduced-order model.
With this method, we produce three trajectories close to the 6D SSM with different initial conditions, of which we use two as training data and one as test data.
For validation purposes, we also pick the individual mode shapes as initial conditions and use as test data.
The individual modal contributions in these initial conditions were chosen as follows:
\begin{center}
\begin{tabular}{|c|ccc|c|}\hline
Initial & & Mode & & Type \\ condition & 1 & 2 & 3 &  \\ \hline
 1 & 1 & 0 & 0 & Test \\ \hline
 2 & 0 & 1 & 0 & Test \\ \hline
 3 & 0 & 0 & 1 & Test \\ \hline
 4 & 0.8 &-0.8 & 0.8 & Train \\ \hline
 5 &-0.1 & 0.8 & 0.8 & Train \\ \hline
 6 &-0.6 &-0.2 &-0.8 & Test  \\ \hline
\end{tabular}
\end{center}

We choose the delay embedding parameters guided by the observations in Sect.~\ref{sec:picktimelag}.
Setting $\lag=1$ such that the timelag $\tau=\Delta t$ and the embedding dimension to $\deldim=50$ gives a local optimum of the function (\ref{eq:delaycost}) with the computed eigenvalues, while still keeping the maximal delay $\lag\deldim$ moderate to prevent folding of the embedding.

Fig.~\ref{fig:vkdelayspaces} shows the delay embedding of the single-mode trajectories 1-3, corresponding to the first three modes, in three of the 50 delay coordinates.
These trajectories visualize the orientations of the corresponding eigenspaces in the observable space. 
Indeed, minimization of (\ref{eq:delaycost}) corresponds to making these planes orthogonal, simplifying their identification.

\begin{figure}[h]
	\centering
	\subfloat[\label{fig:vkdelayspaces}]{\includegraphics[width=0.33\linewidth]{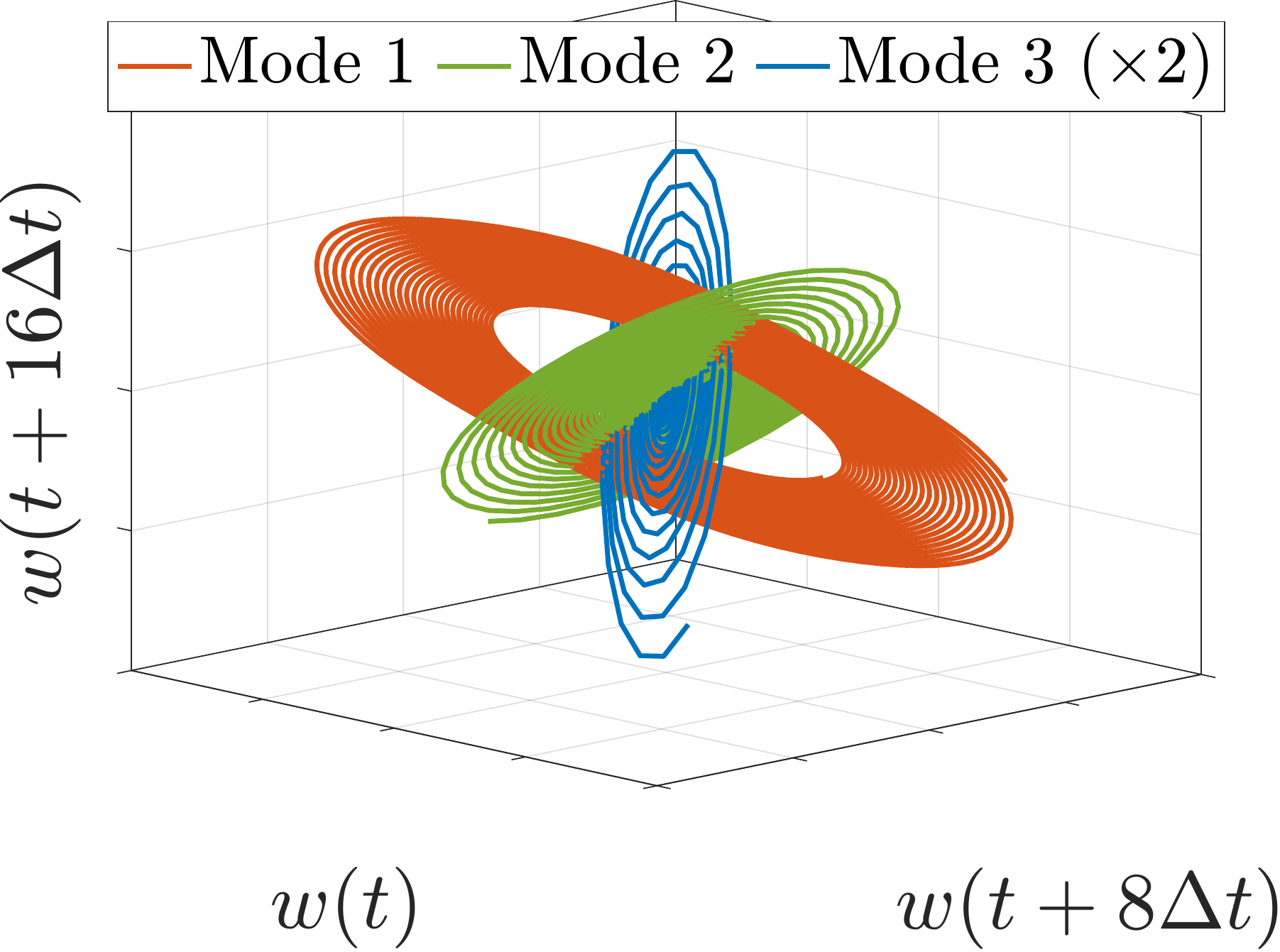}}\hfill
	\subfloat[\label{fig:vkdelaytrain}]{\includegraphics[width=0.33\linewidth]{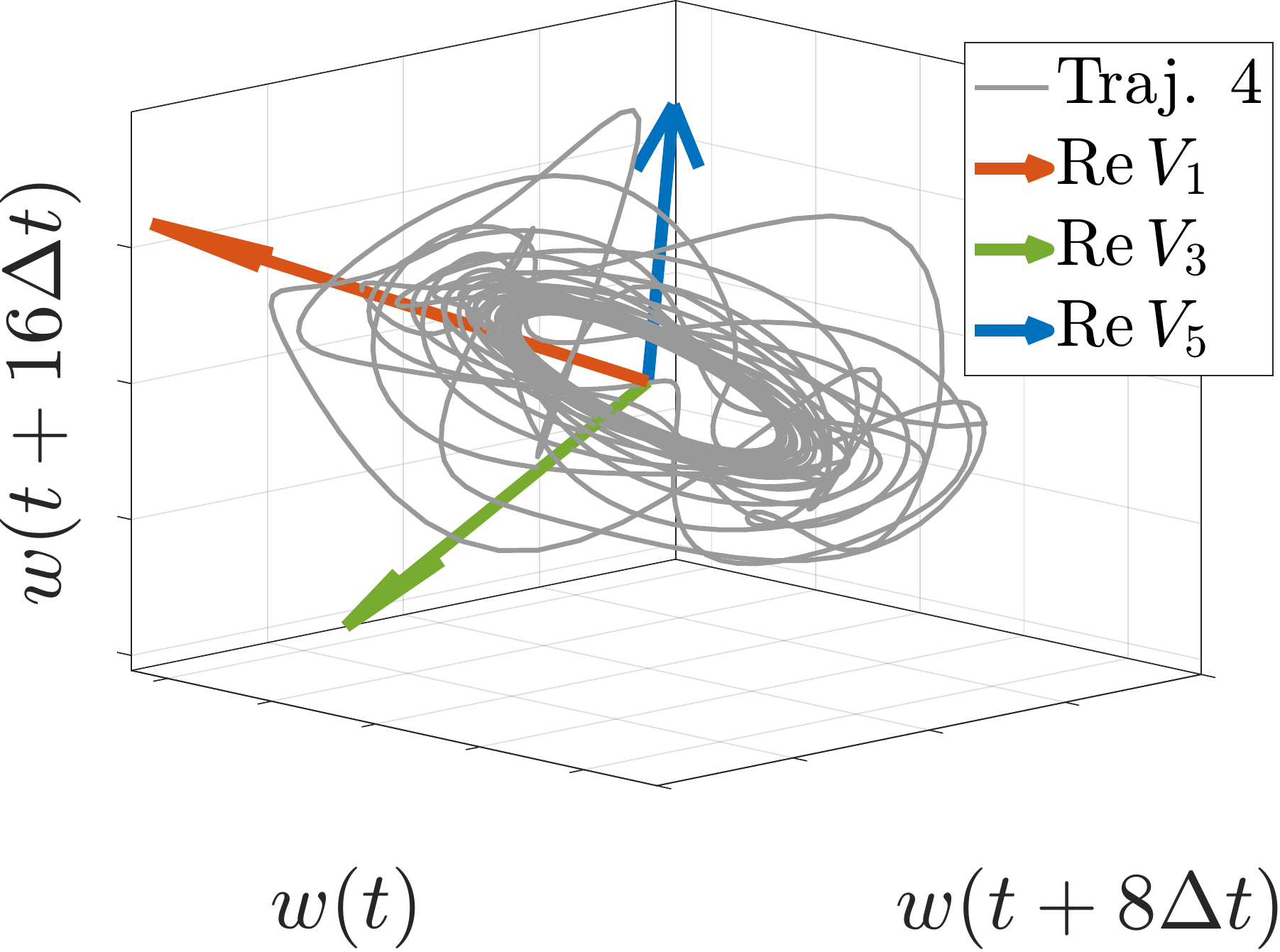}}\hfill
	\subfloat[\label{fig:vkred}]{\includegraphics[width=0.33\linewidth]{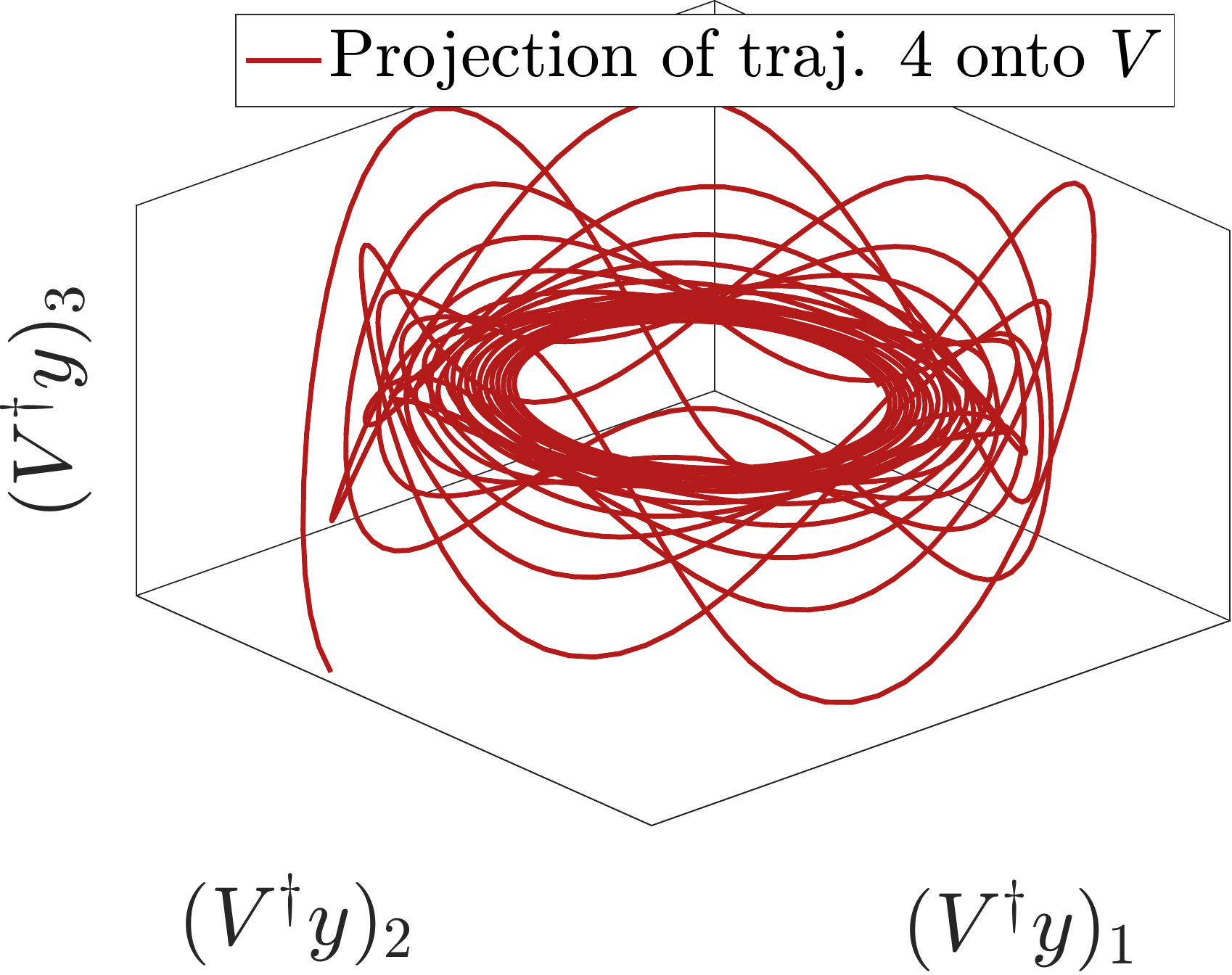}}
	\caption{(a) The trajectories with single modal contributions visualize the modal subspaces in the delay-embedded space. The third mode data has been scaled by a factor 2 to increase visibility. (b) The same delay-embedded view of the first training trajectory, along with the delay-embedded eigenvectors. (c) After projection of this trajectory onto the eigenvectors, the modal structure becomes clear.}\label{fig:vkdelay}
\end{figure}

Fig.~\ref{fig:vkdelaytrain} similarly displays the delay embedding of the first training trajectory along with a visualization of the columns of the \van{} as vectors. 
Our delay theory predicts that projection of the data onto these vectors yields modal coordinates, as shown in Fig.~\ref{fig:vkred}.
This space will serve as the reduced coordinates of the SSM.

After projection onto these eigenvectors, we approximate the geometry of the 6D SSM with a 3rd order polynomial.
For the reduced dynamics in \mSSM{}, we also use a 3rd order approximation. 
We compute the normal form of this reduced dynamics up to 7th order and obtain our model for the reduced dynamics.
The terms up to third order in polar form are found by \mSSM{} to be of the form
\begin{equation}\label{eq:vknf}
	\left(\begin{array}{c} \dot{\rho}_{1} \\ \rho_{1}\dot{\theta}_{1} \\ \dot{\rho}_{2} \\ \rho_{2}\dot{\theta}_{2} \\ \dot{\rho}_{3} \\ \rho_{3}\dot{\theta}_{3} \\  \end{array}\right)
	=
	\left(\begin{array}{c} 
	0.3058\,{\rho _{1}}^3+2.088\rho _{1}\,{\rho _{2}}^2-3.091\rho _{1}\\ 
	102.0\,{\rho _{1}}^3+82.70\,{\rho _{2}}^2\rho_1+657.2\rho_1\\ 
	-2.705\,{\rho _{1}}^2\rho _{2}+1.723\,{\rho _{2}}^3-23.72\rho _{2}\\ 
	95.64\,{\rho _{1}}^2\rho_2+115.6\,{\rho _{2}}^3+1812\rho_2\\ 
	-8.968\rho _{3}\,{\rho _{1}}^2-13.27\rho _{3}\,{\rho _{2}}^2-88.47\rho _{3}\\ 
	115.9\,{\rho _{1}}^2\rho_3+85.04\,{\rho _{2}}^2\rho_3+3558\rho_3 \end{array}\right)+\ordo{5}.
\end{equation}

We transform the initial conditions from the observable space to the normal form and integrate our model to predict signal decay.
This produces a normalized mean trajectory error (as defined in \cite{cenedese21}) of 2.2 \% on the test data.
Some of the predictions are shown in Fig.~\ref{fig:vkdecay}.

\begin{figure}[h]
	\centering
	\subfloat[\label{fig:vkdecaytrain}]{\includegraphics[width=0.33\linewidth]{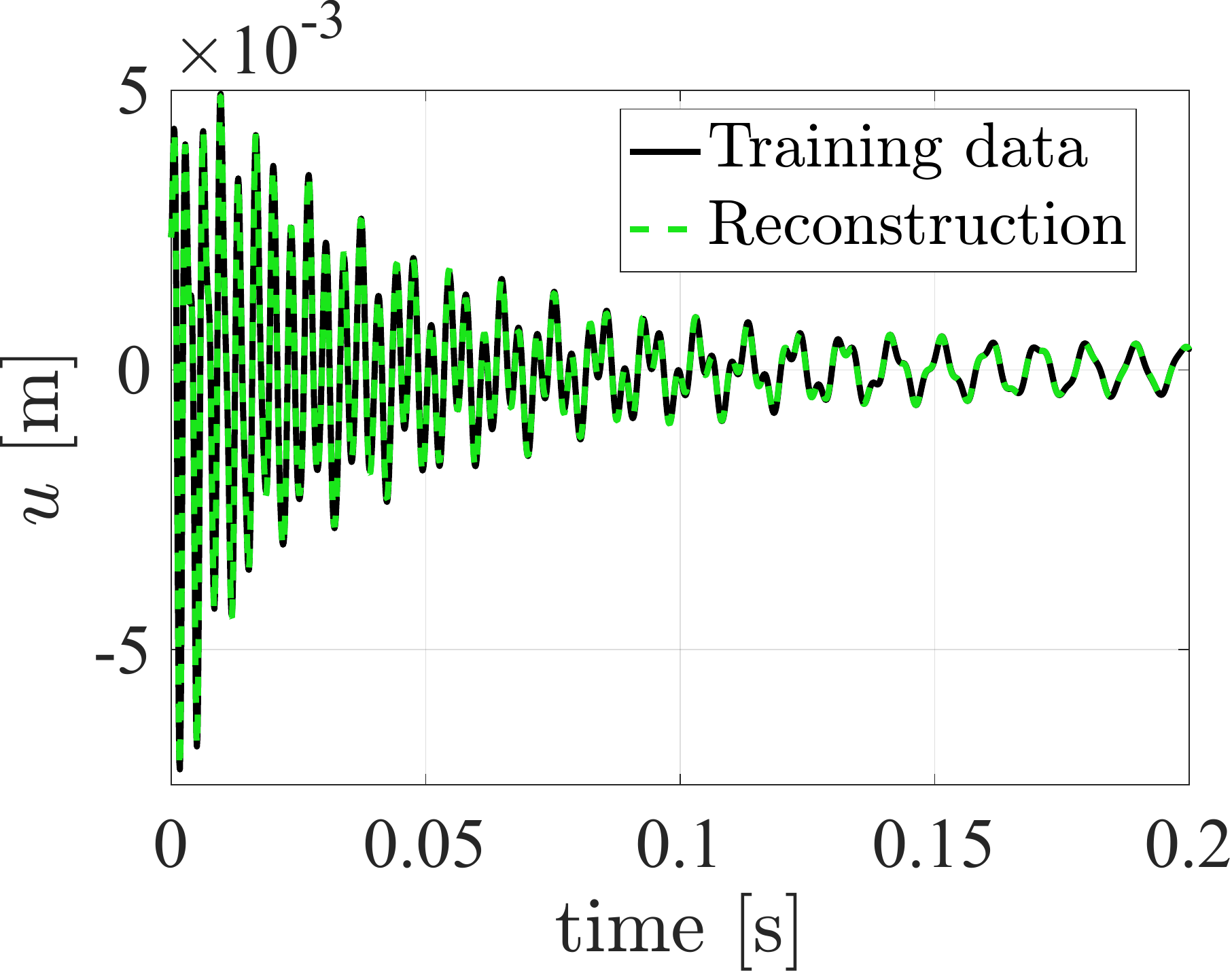}}
	\subfloat[\label{fig:vkdecaytest}]{\includegraphics[width=0.33\linewidth]{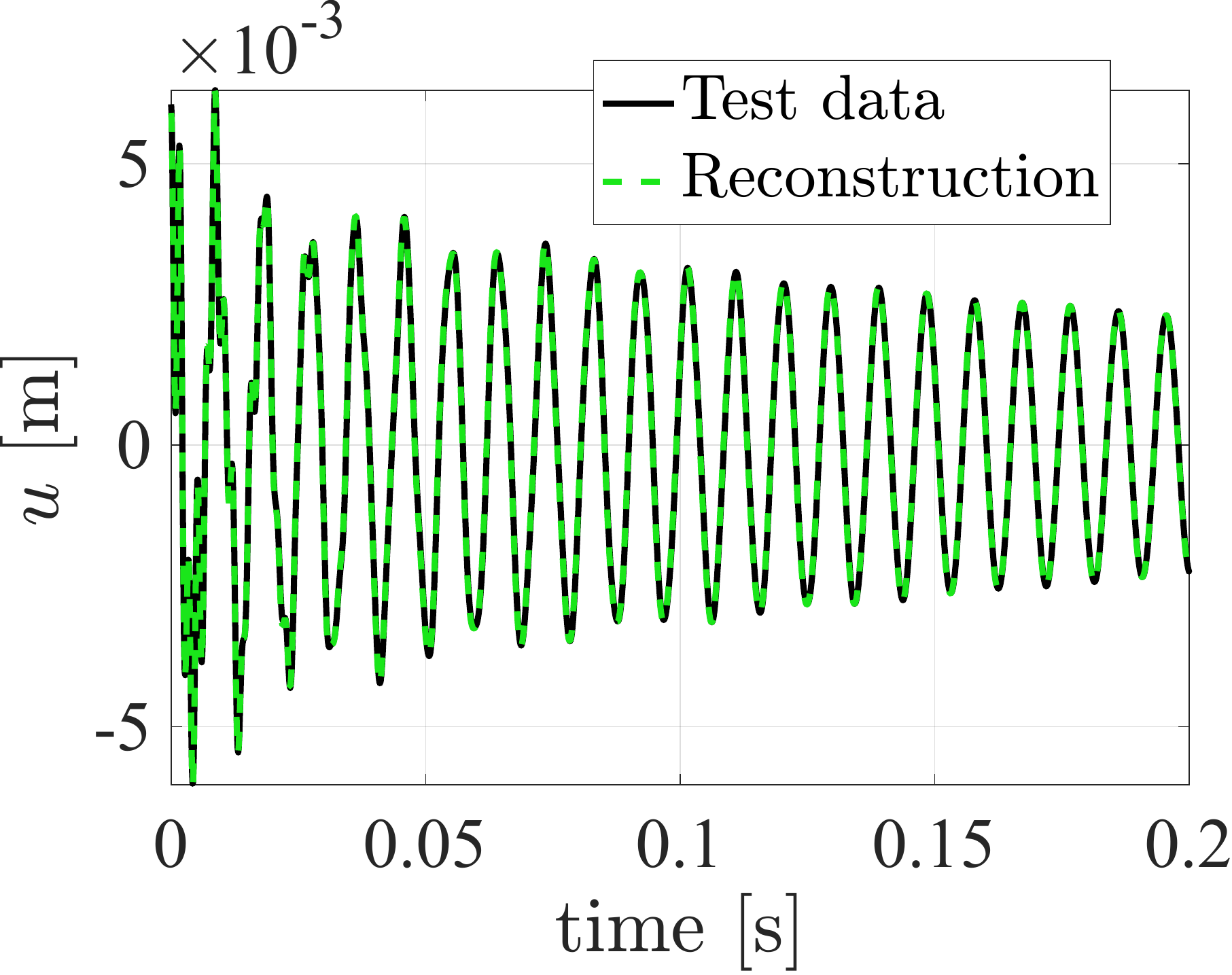}}
	\subfloat[\label{fig:vkphaseportrait}]{\includegraphics[width=0.33\linewidth]{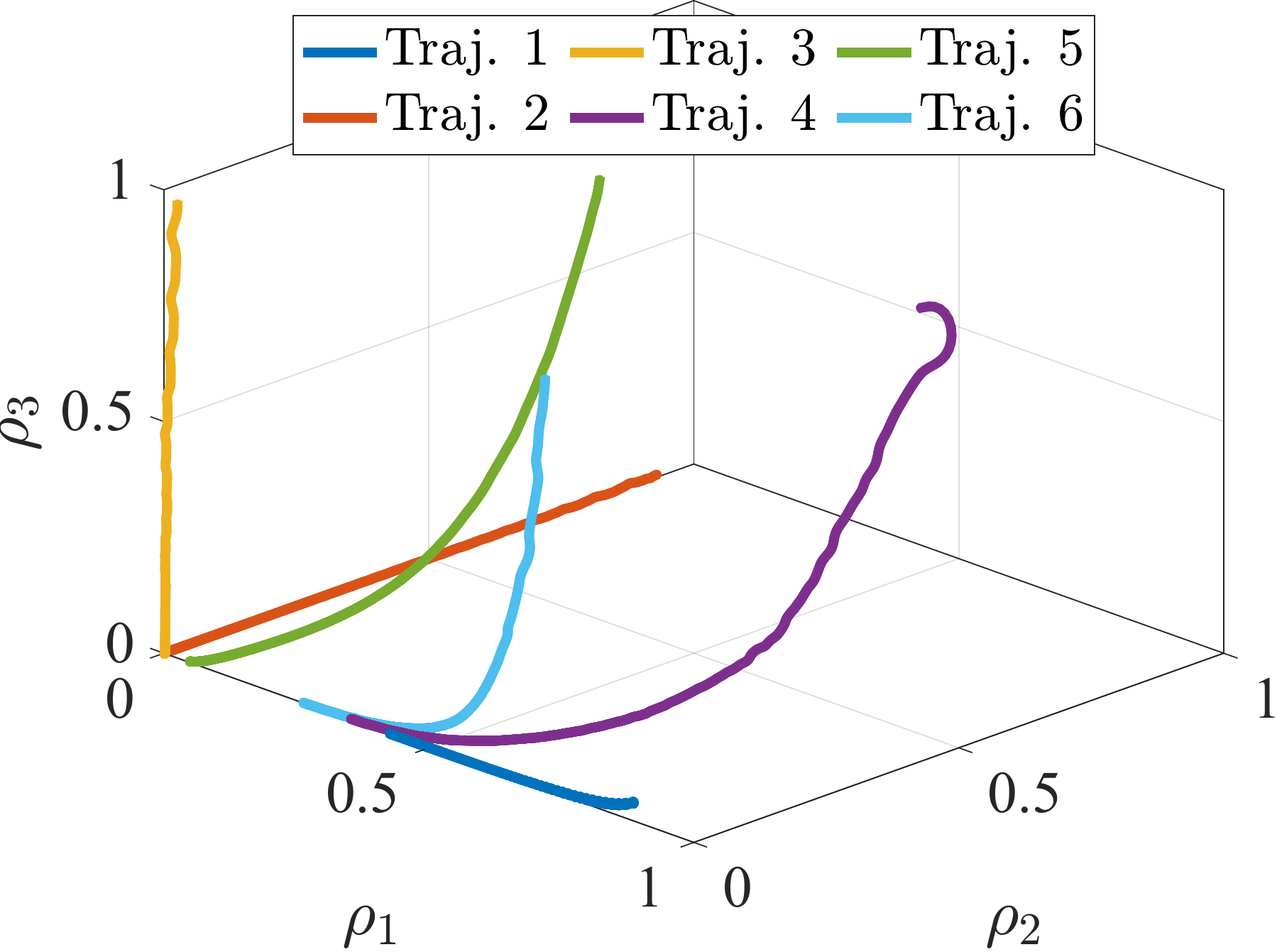}}
	\caption{(a,b) Predictions from \mSSM{} for the decaying trajectories 5 and 6 (c) Phase portrait of the trajectories after transformation to the normal form.}\label{fig:vkdecay}
\end{figure}

We also visualize our reduced-order model by plotting the instantaneous frequency and damping as predicted by the normal form \eqref{eq:vknf} for varying amplitudes of mode 1 and 2. 
For instance, our model predicts hardening of the first mode with respect to both the first and the second modal amplitudes (Fig.~\ref{fig:vkeigenfreq1}), a decrease in the instantaneous damping of mode 1 with respect to mode 2 (Fig.~\ref{fig:vkeigendamp1}), and independence of the third instantaneous frequency with respect to itself (Fig.~\ref{fig:vkeigenfreq3}).
The predictions for each of the trajectories are included for reference.

\begin{figure}[h]
	\centering
	\subfloat[\label{fig:vkeigenfreq1}]{\includegraphics[width=0.33\linewidth]{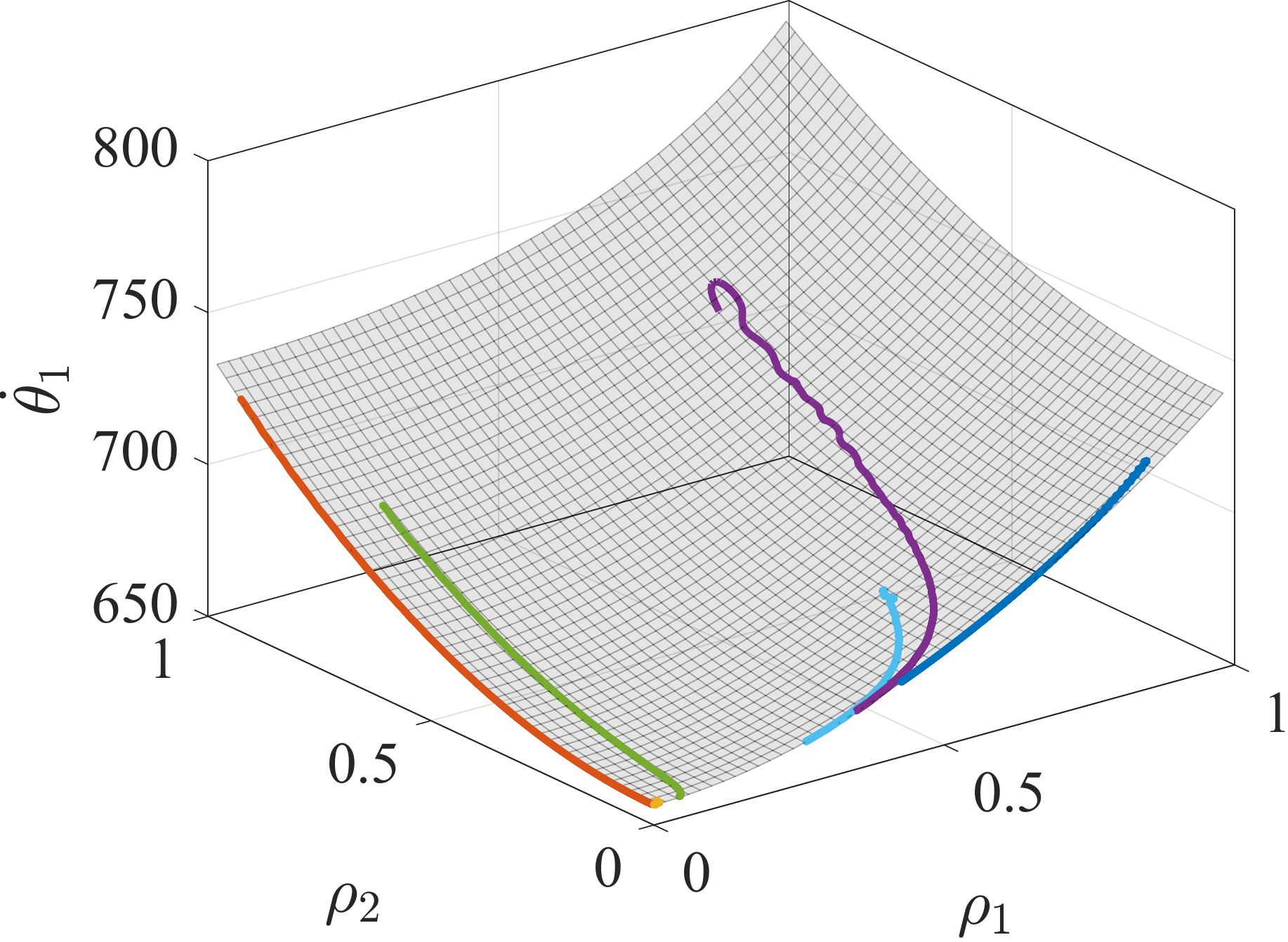}}\hfill
	\subfloat[\label{fig:vkeigendamp1}]{\includegraphics[width=0.33\linewidth]{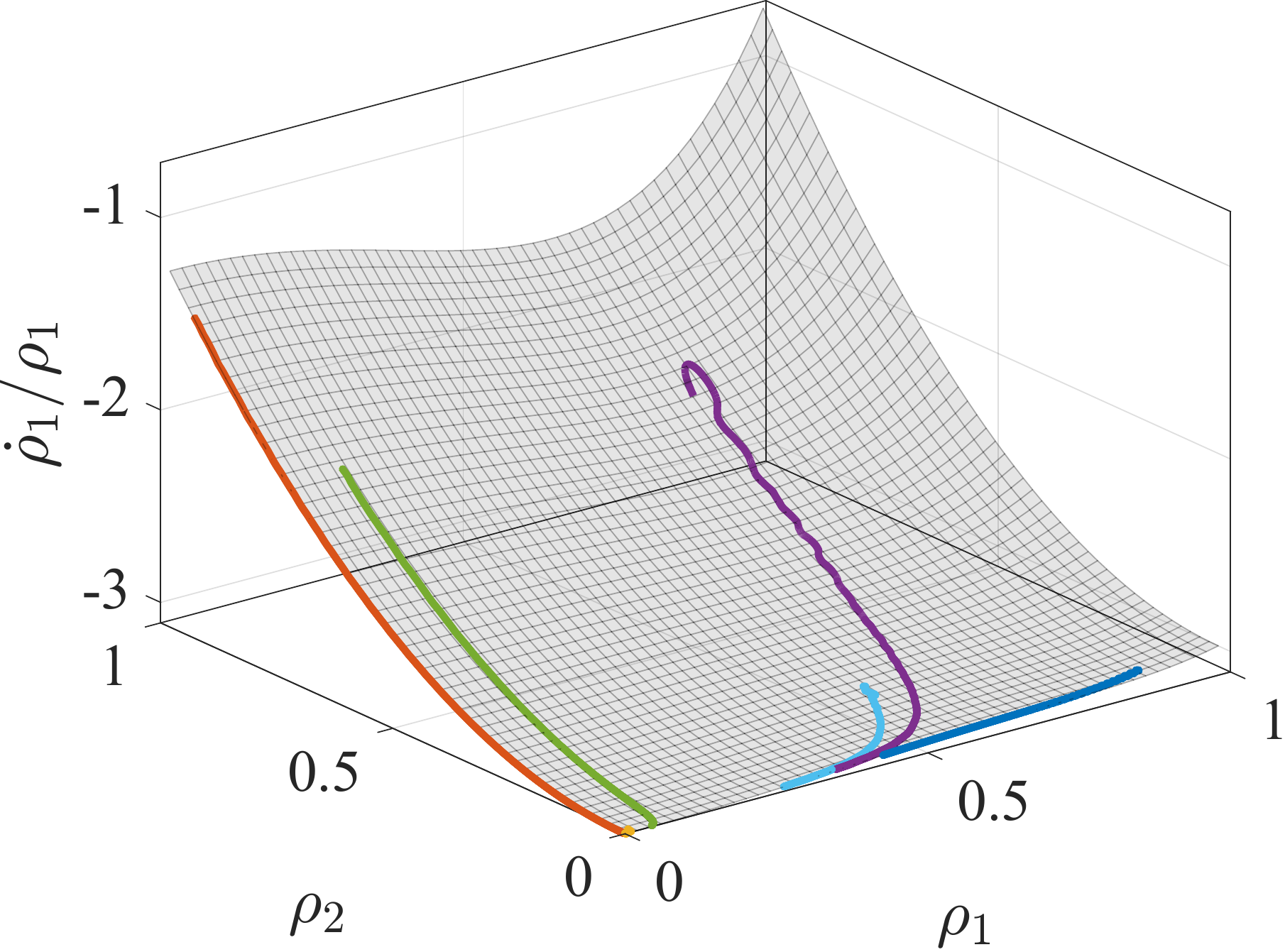}}
	\subfloat[\label{fig:vkeigenfreq3}]{\includegraphics[width=0.33\linewidth]{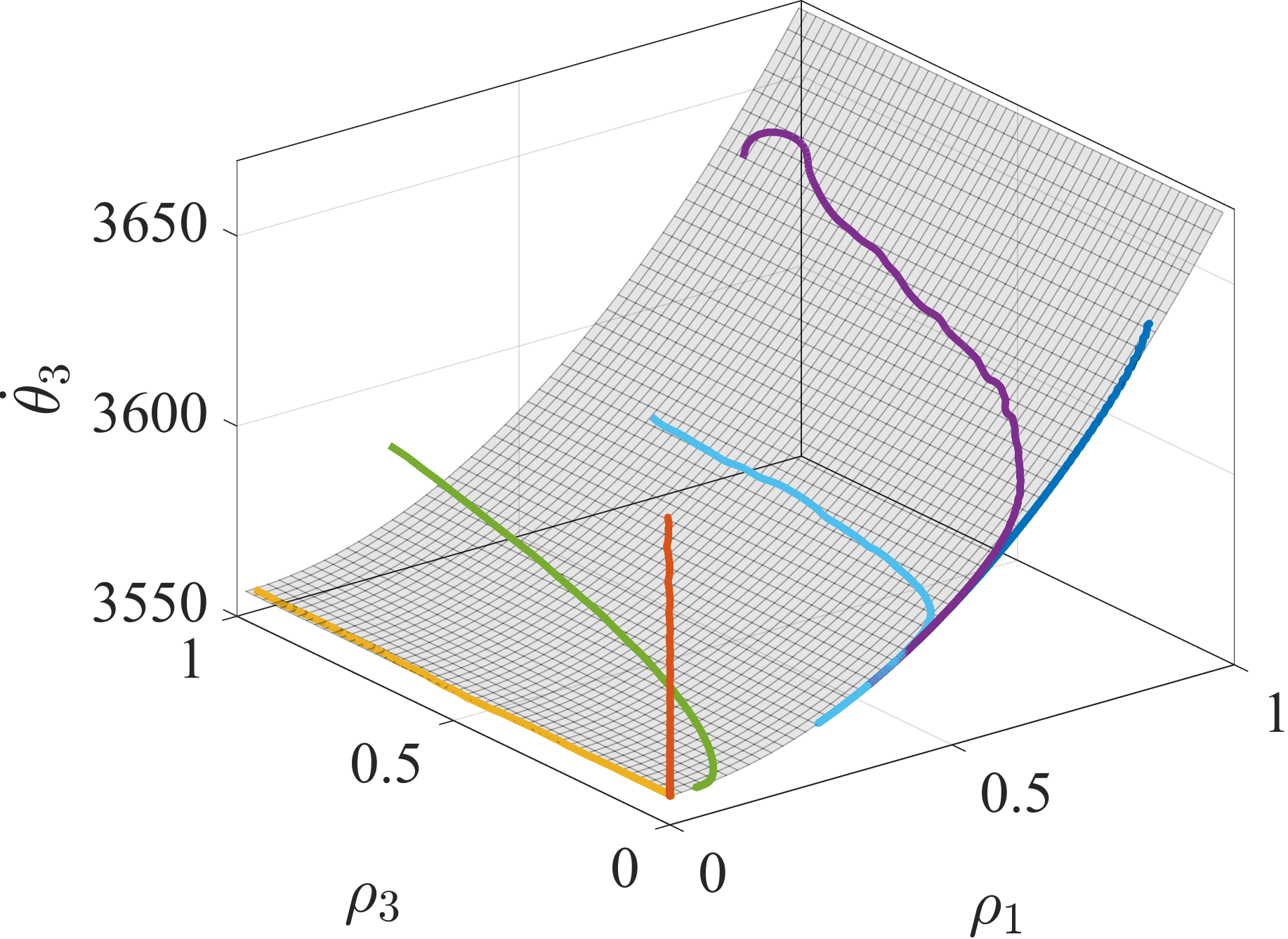}}
	\caption{Visualization of the normal form \eqref{eq:vknf} with the trajectories for (a) instantaneous frequency and (b) damping of mode 1, as well as (c) frequency of mode 3.}\label{fig:vknf}
\end{figure}

\subsection{Multimodal sloshing of water in a tank}

\begin{figure}[h]
	\centering
	\subfloat[\label{fig:sloshingsetup}]{\includegraphics[width=0.65\linewidth]{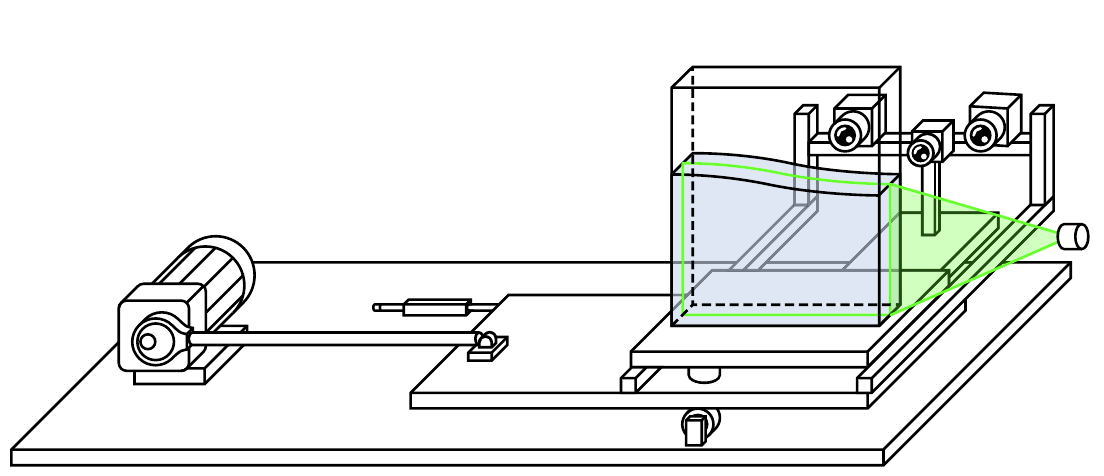}}\hfill
	\subfloat[\label{fig:sloshingmodeshapes}]{\includegraphics[width=0.35\linewidth]{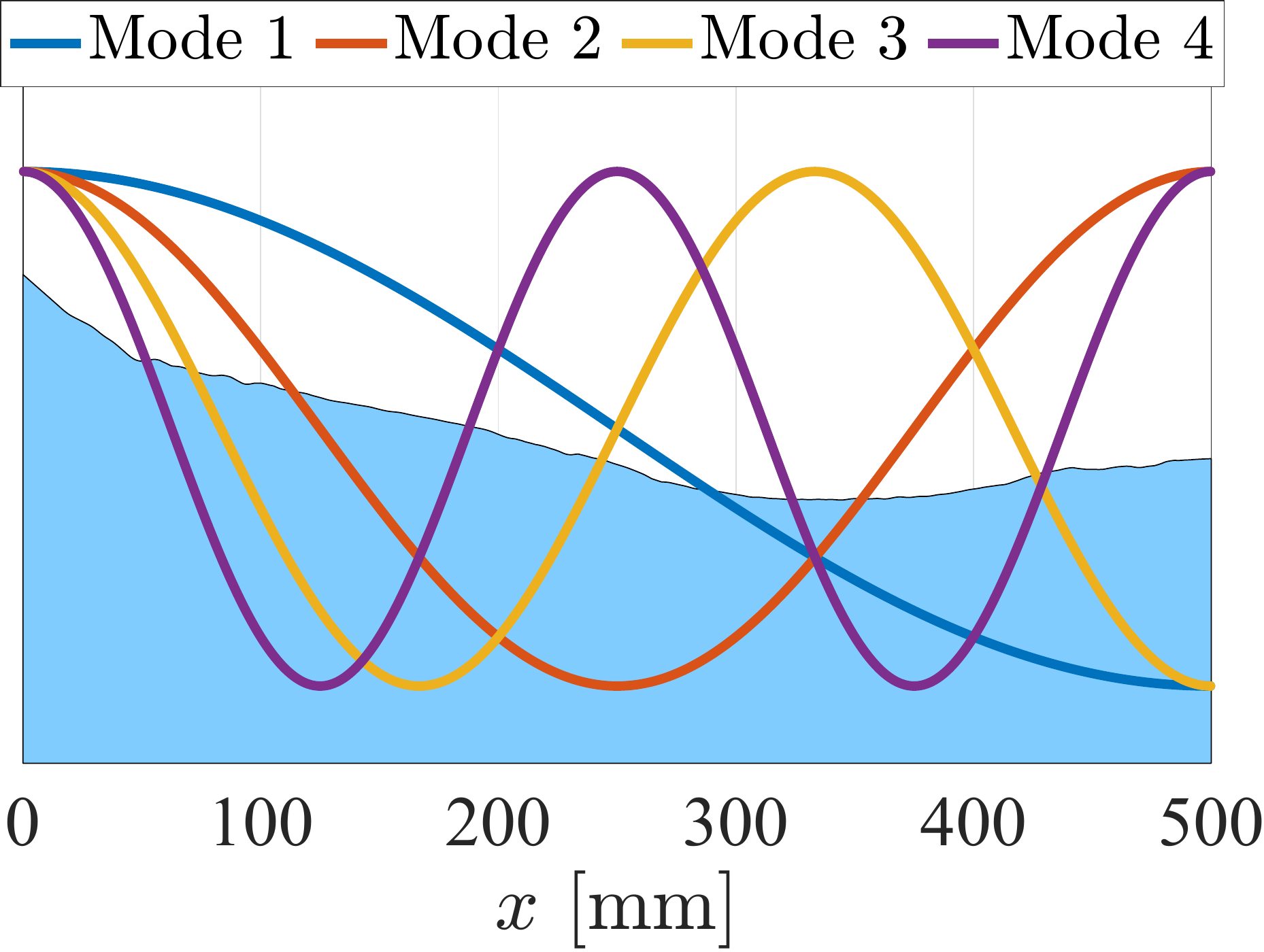}}
	\caption{(a) Experimental setup for tank sloshing (adapted from \cite{bauerlein21}) (b) The first four sloshing mode shapes.}\label{fig:sloshingprofile}
\end{figure}

For our final example, we apply our results to sloshing experiments.
Sloshing models have a wide range of industrial applications, including fluid container interaction with ship motion \cite{mitra12}, road transportation of fluids \cite{fleissner10}, damping devices in towers \cite{hickey22}, and fuel tank design in spacecraft \cite{dodge00,abramson66}.
A tank partially filled with water exhibits several nonlinear phenomena under horizontal harmonic excitation \cite{taylor53}.
On the one hand, intensified fluid motion can alter the instantaneous damping and frequency of the first sloshing mode \cite{faltinsen09}.
On the other hand, increasing the amplitude further activates several nonlinearly coupled modes of the system and gives rise to a range of different wave motions \cite{narimanov57,faltinsen00}.

Our training data comes from experiments described in Ref.~\cite{bauerlein21} with a rectangular tank of width $w=500$ mm and thickness $50$ mm, partially filled with water up to a height of $h=400$ mm. 
The tank was attached to a horizontally moving platform harmonically excited by a motor at different frequencies. 
Then, once the system had reached a steady state, the motor was turned off.
Depending on the forcing frequency, this periodic response exhibited planar, wave-breaking, or three-periodic motion. 
The three-periodic forced state was characterized by an increase in the response amplitude every third forcing cycle, while the wave-breaking response was defined as overturning of the water close to the walls \cite{bauerlein21}.
A camera detected the surface profile $\vct h$ with the sampling time $\Delta t = 0.01$ s. 
Figure \ref{fig:sloshingsetup} displays the experimental setup. 

While previous work successfully captured the dynamics of the main sloshing mode using a 2D SSM for the center of mass signal \cite{cenedese21} and the full surface profile \cite{axas22}, here, we model the decay from a multimodal state by identifying a 6D SSM, corresponding to the nonlinear extension of the three dominant oscillatory modes.
We train on three decaying measurements: Trajectory 1 and 2 start at a three-periodic state, and Trajectory 3 starts at a wave-breaking state. 

The observable vector $\vct \obs$ is the surface profile measured at $1\,771$ points along the tank width. 
Since this function is multi-dimensional, in order to apply our theory on delay-embedded tangent spaces, we need an estimate of the eigenvalues and linear mode shapes in our observable. 
The eigenfrequencies can be computed from potential theory \cite{faltinsen09} as
\begin{equation}
	\omega_{\modi} = \sqrt{\frac{g\pi}{w}\modi \tanh\left(\pi\modi\frac{h}{w}\right)},
\end{equation}
which scales approximately with the square root of the mode number $\modi$ for our configuration. 
The first five eigenfrequencies are [7.80, 11.1, 13.6, 15.7, 17.6]~rad/s, with an approximate 1:2 resonance between frequencies 1 and 4.
The mode shapes by the same theory are
\begin{equation}
	\hat{\vct \eigv}_\modi = \cos(\modi x/w),\quad x\in [0,w],
\end{equation}
shown in Fig.~\ref{fig:sloshingmodeshapes}.
For the tangent space, in principle, we also need the linear damping of each mode.
In practice, this real part of the eigenvalues has very little influence on $\vct{\Van}$ for limited delay embedding and we pick the values $[-0.05,-0.07,-0.08,-0.09,-0.1]$ based on previous fits of the first mode and the assumption of increasing damping with the mode number.

\begin{figure}[h]
	\centering
	\subfloat[\label{fig:sloshingred}]{\includegraphics[width=0.33\linewidth]{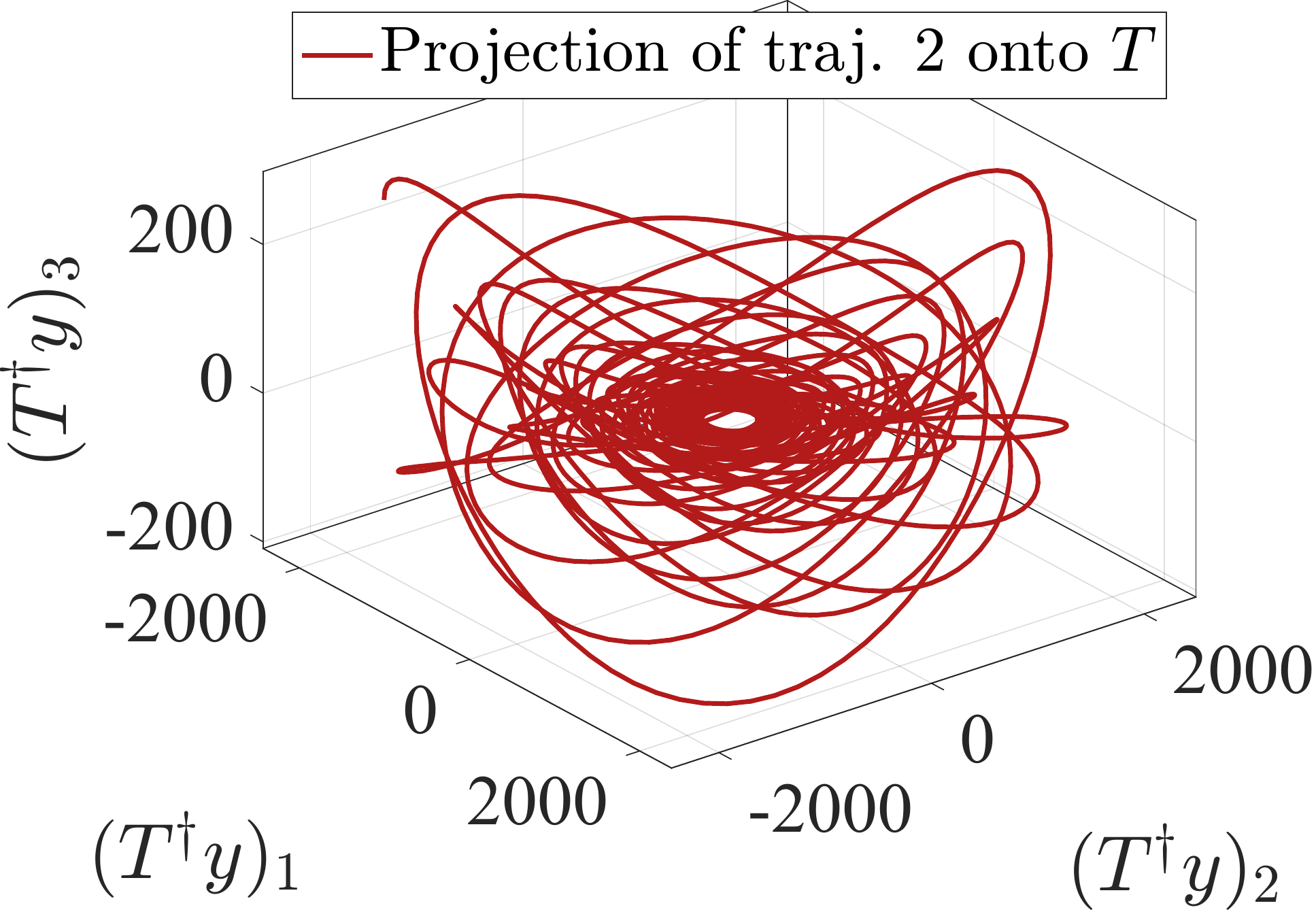}}\hfill
	\subfloat[\label{fig:sloshingmodalamps}]{\includegraphics[width=0.33\linewidth]{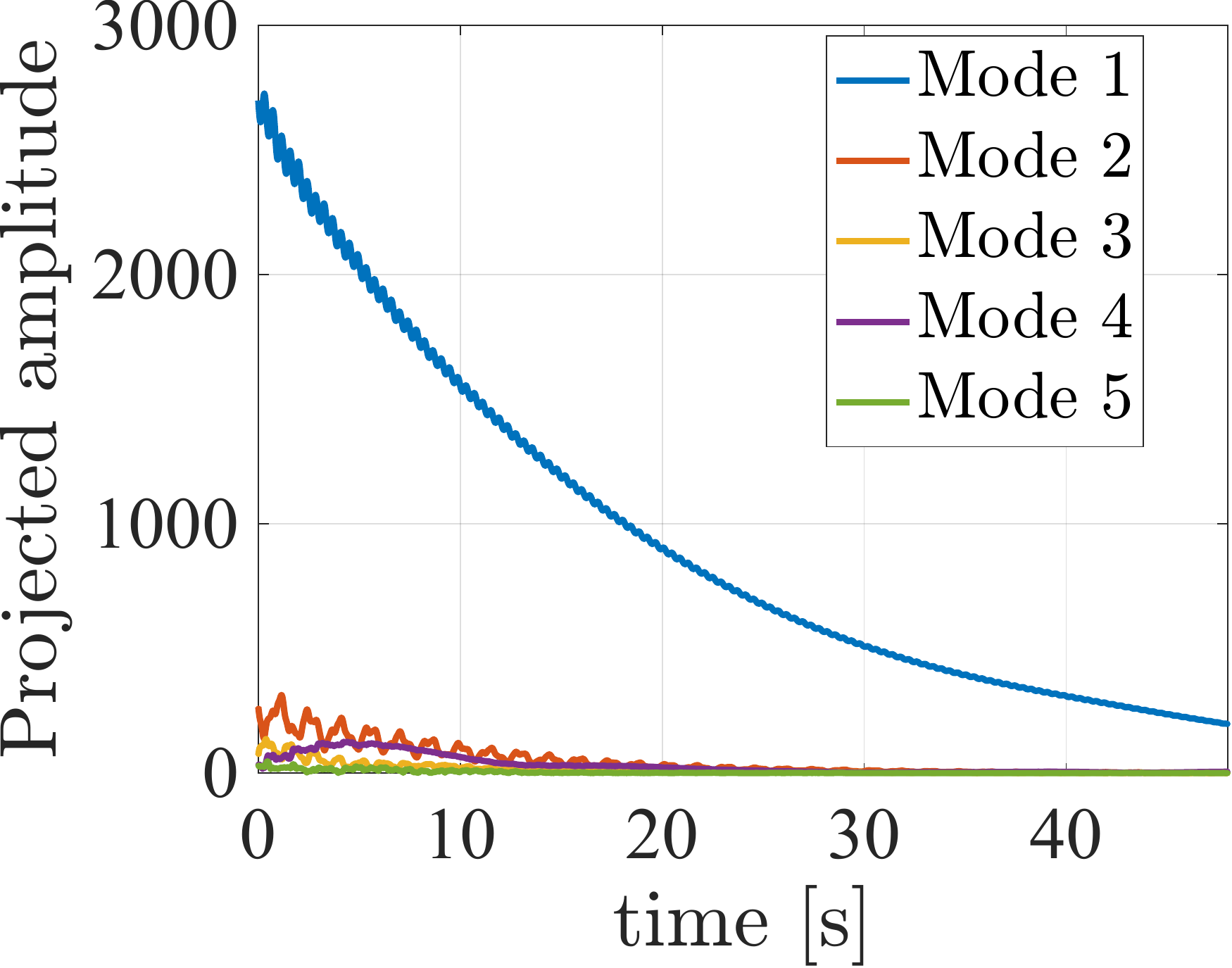}}\hfill
	\subfloat[\label{fig:sloshingmodalampszoom}]{\includegraphics[width=0.33\linewidth]{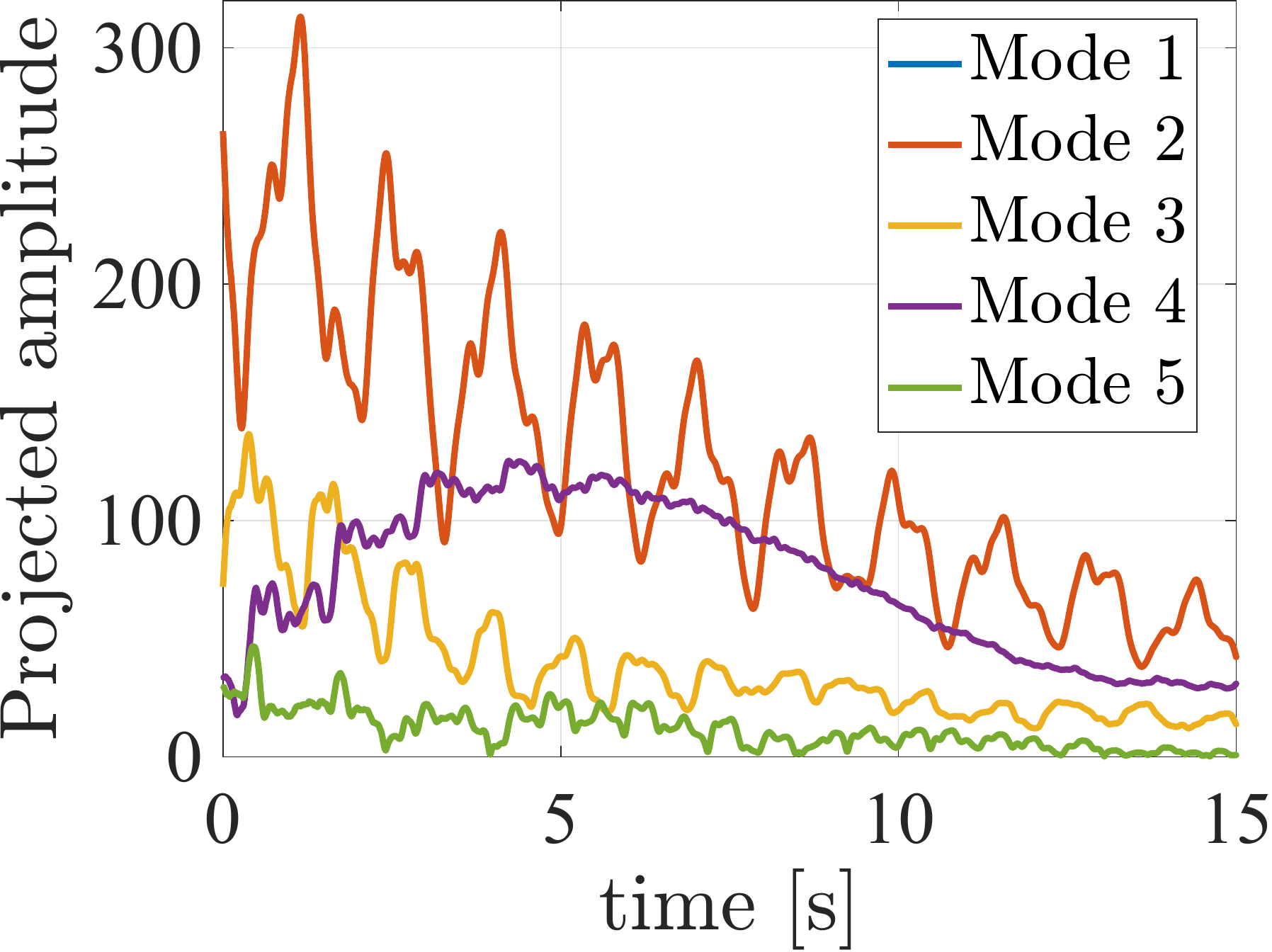}}
	\caption{(a) Projecting one of the trajectories onto the tangent space vectors unveils the modal structure. (b) By projecting the trajectory onto the eigenvectors and taking the absolute value, we can estimate the relative modal contributions in the signal (c) A zoomed-in view of (b) indicates that modes 1, 2, and 4 dominate.}\label{fig:sloshingdelayanalysis}
\end{figure}

Based on (\ref{eq:delaycost}), we delay-embed the data with timelag $\tau=5\Delta t$ and dimension $\deldim=47$.
A projection of the delay-embedded data onto the eigenvectors $\tang$ predicted by our theory appears to yield modal coordinates, as indicated in Fig.~\ref{fig:sloshingred}.

Consequently, the norm of these projections can be used as a heuristic measure of the modal content in the signal.
This procedure should be used with caution, since it does not take manifold curvature into account, but it can be employed to provide an initial guess for the SSM dimension. 
In Fig.~\ref{fig:sloshingmodalamps}, we plot the absolute value of the projection onto each modal subspace of $\tang$ over time for Trajectory 2.
This plot shows that the first mode dominates, while the zoomed-in view (Fig.~\ref{fig:sloshingmodalampszoom}) indicates that the second and fourth modes appear to be the most prevalent of the higher modes throughout the decay.
The third and fifth mode are present at first but quickly die out.
All amplitudes are decaying except for the fourth mode, which instead initially grows.
Based on this analysis, we will identify a 6D SSM emanating from the spectral subspace of modes 1, 2, and 4. 
This choice also takes SSM theory into account, by which the 1:2 resonance requires that the modal subspace of the fourth mode is included in the spectral subspace of the SSM.
We choose to start our training data after 1.2~s, as the third and fifth modal amplitudes are small thereafter and we expect the trajectory to lie sufficiently close to the SSM.

With an SSM parametrization order $\ssmorder=4$, reduced dynamics order $\redorder=3$, and normal form order $\nforder=3$, we compute the SSM geomety and dynamics and integrate our reduced-order model to predict the decay from the various flow states. 
This yields a normalized mean trajectory error (NMTE) \cite{cenedese21} of 2.6 \%.
\mSSM{} successfully detects and accounts for the internal resonance by adding phase-dependent terms to the computed normal form, which reads
\begin{equation}
	\begin{array}{l}
	\frac{\dot{\rho}_{1}}{\rho_1} =
	-0.056-0.0069\sin(\psi-0.26)\rho_4-0.0015\rho_4^2-0.039\rho_{2}^2+0.023\rho_{1}^2\\ 
	\dot{\theta}_{1} =
	7.78+0.0069\cos(\psi-0.26)\rho_4+0.040\rho _{4}^2+0.016\rho _{2}^2-0.82\rho_{1}^2\\ 
	\frac{\dot{\rho}_2}{\rho_2} =
	-0.13+0.15\rho_4^2-0.89\rho_{2}^2+0.37\rho_{1}^2\\ 
	\dot{\theta}_{2} =
	11.4+0.57\rho_{4}^2-0.0085\rho_{2}^2-2.2\rho_{1}^2\\ 
	\frac{\dot{\rho}_{4}}{\rho_4} =
	-0.30-0.29\rho_4^2+0.67\rho_{2}^2-0.27\sin(\psi+1.4)\rho_{1}^2+1.2\rho_{1}^2\\ 
	\dot{\theta}_{4} =
	15.9-0.085\rho_4^2+1.2\rho_{2}^2+0.27\cos(\psi+1.4)\rho_{1}^2-2.0\rho_{1}^2 
	\end{array}
\end{equation}
where $\psi=\theta_4-2\theta_1$ and the subscripts denote the corresponding mode number.
Looking at the linear part, we see that the eigenfrequencies are well captured.

Good agreement between the experimentally measured surface profile elevation at the tank's leftmost point and the delay-embedded SSM-reduced prediction is shown for the first period-three initial state in Fig.~\ref{fig:sloshingdecay1} and the wave-breaking state in Fig.~\ref{fig:sloshingdecay3}.
Further, our 6D reduced model can accurately predict the full surface profile decay, with snapshots shown in Figure \ref{fig:sloshingprofile123}.

\begin{figure}[h]
	\centering
	\subfloat[\label{fig:sloshingdecay1}]{\includegraphics[width=0.33\linewidth]{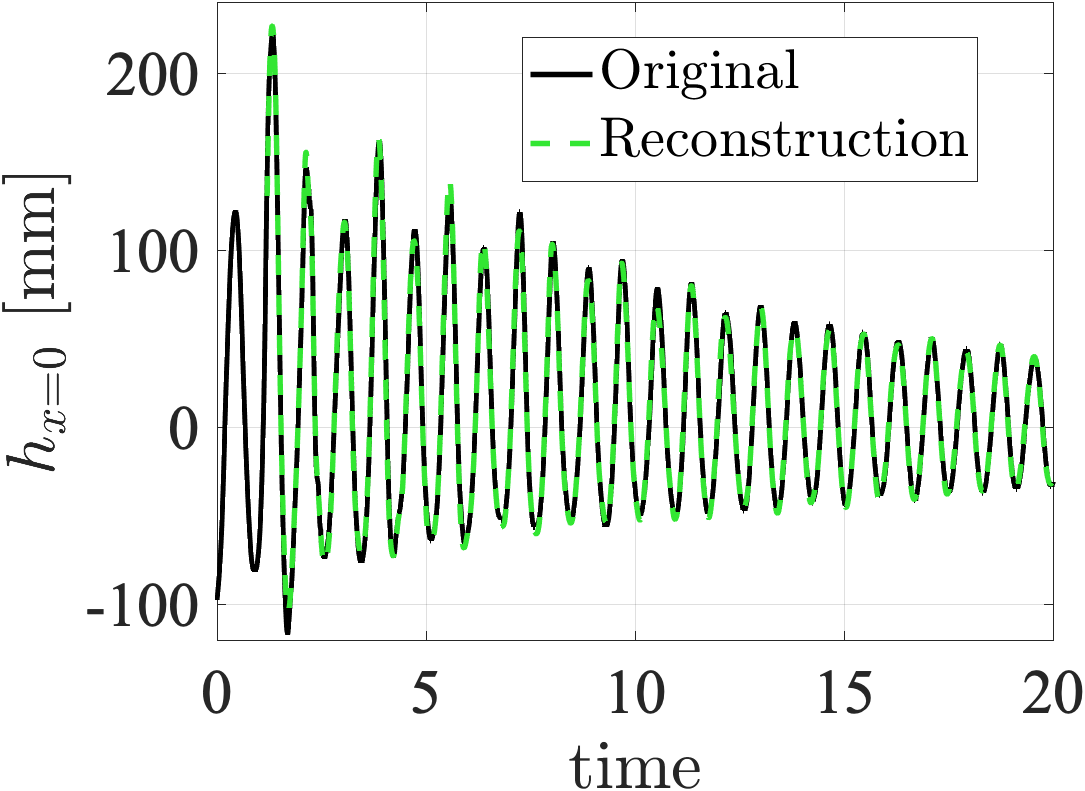}}\hfill
	\subfloat[\label{fig:sloshingdecay3}]{\includegraphics[width=0.33\linewidth]{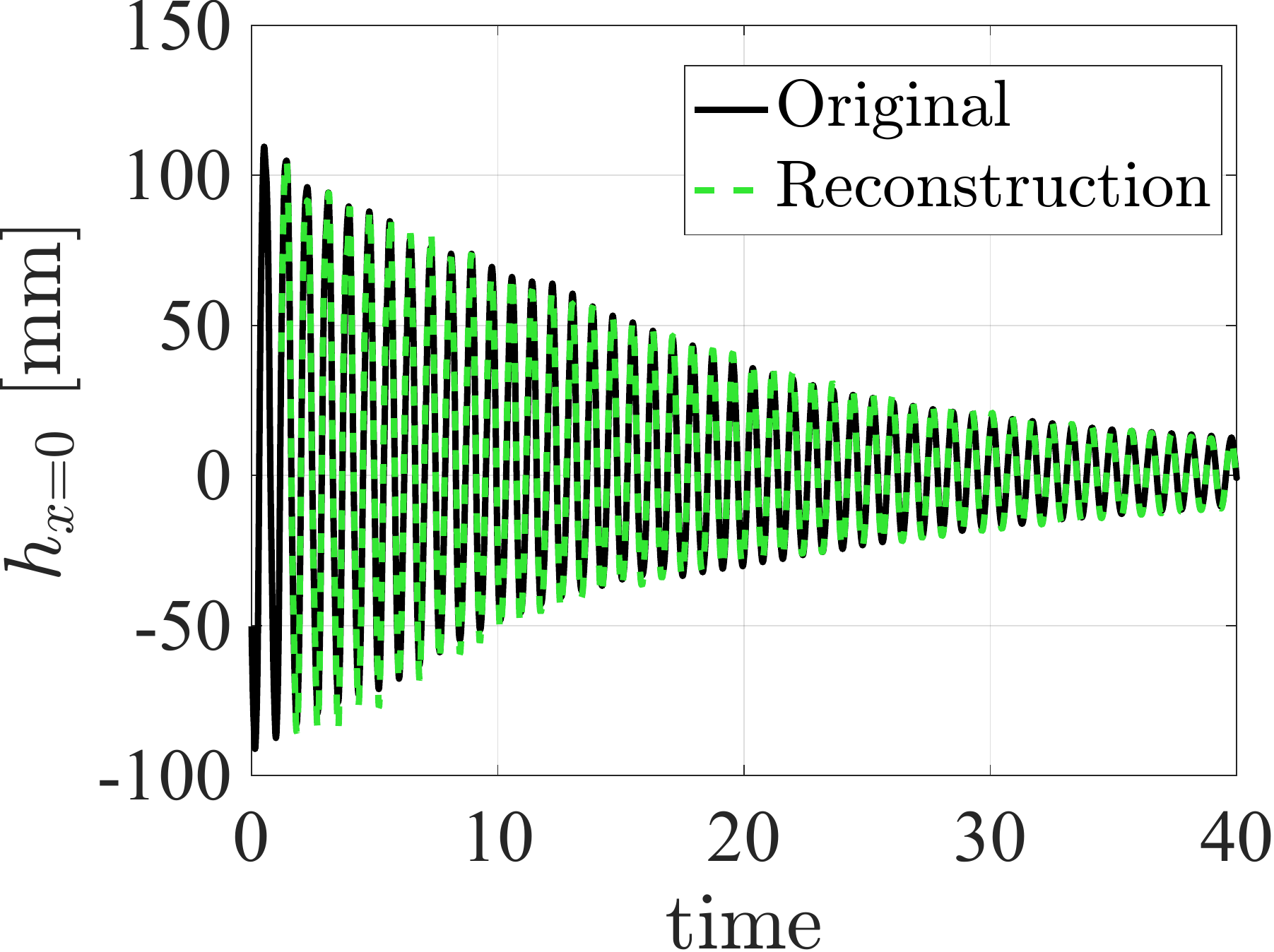}}
	\subfloat[\label{fig:sloshingphaseportrait}]{\includegraphics[width=0.33\linewidth]{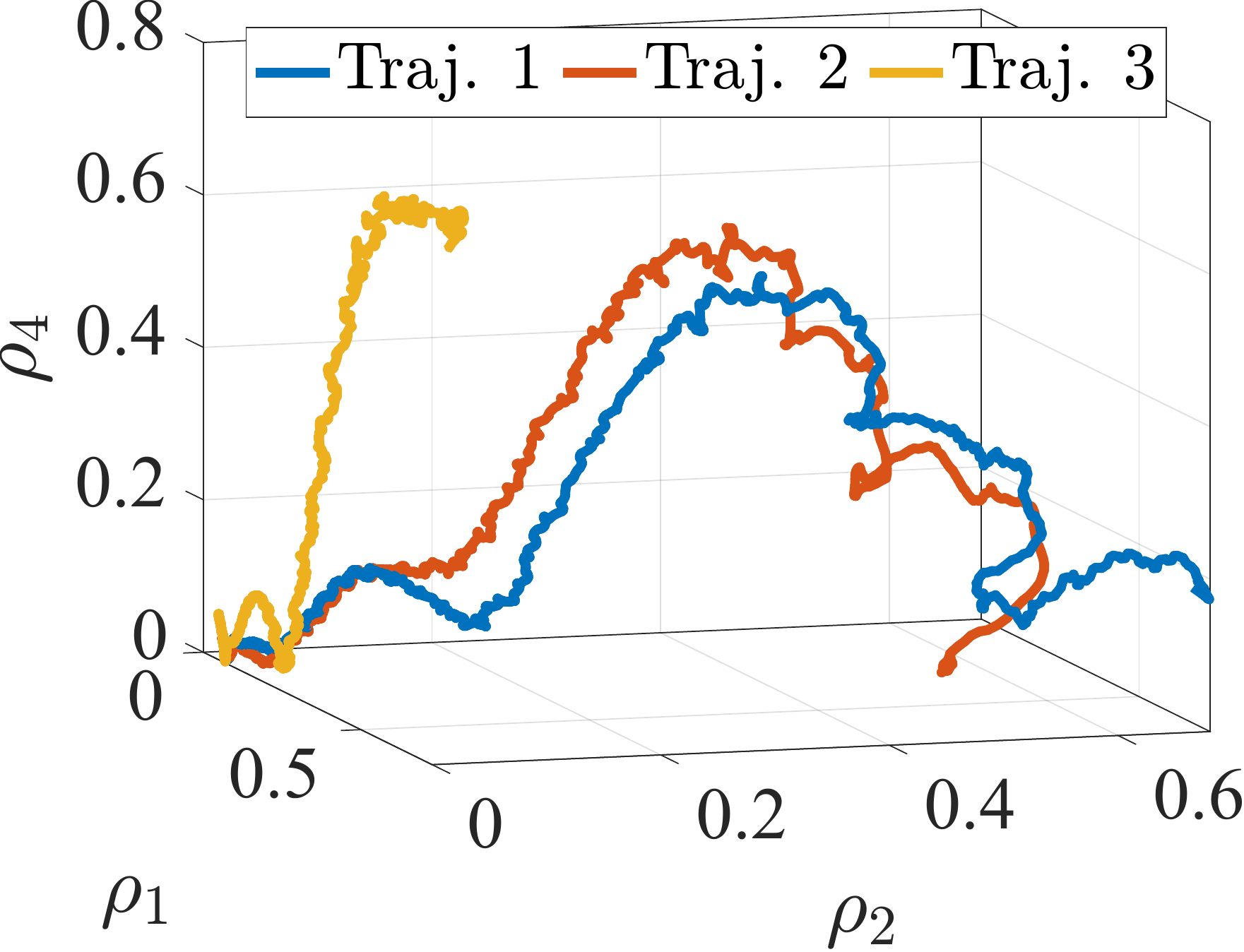}}\hfill
	\caption{The prediction on the 6D SSM for the decay of (a) Trajectory 1 and (b) Trajectory 3. (c) Phase portrait of the amplitudes of the normal form coordinates on the SSM for each of the trajectories shows the modal contributions and development for different intial flow states.}\label{fig:sloshingpred}
\end{figure}

We project the training trajectories onto the SSM and transform them to the normal form in polar coordinates.
The development of the amplitudes in the normal form are shown in Fig.~\ref{fig:sloshingphaseportrait} for each trajectory.
This plot suggests that
(i) the wave-breaking motion (Traj.~3) does not seem to have any significant content of the second mode, 
(ii) the amplitude of the fourth mode indeed increases after motor detachment, 
(iii) there is a small oscillation in these signals not captured by our model, which may be due to noise, insufficient separation of the modal subspaces, a mode outside our model, or some other phenomenon. 

\begin{figure}[h]
	\subfloat{\includegraphics[width=0.36\linewidth]{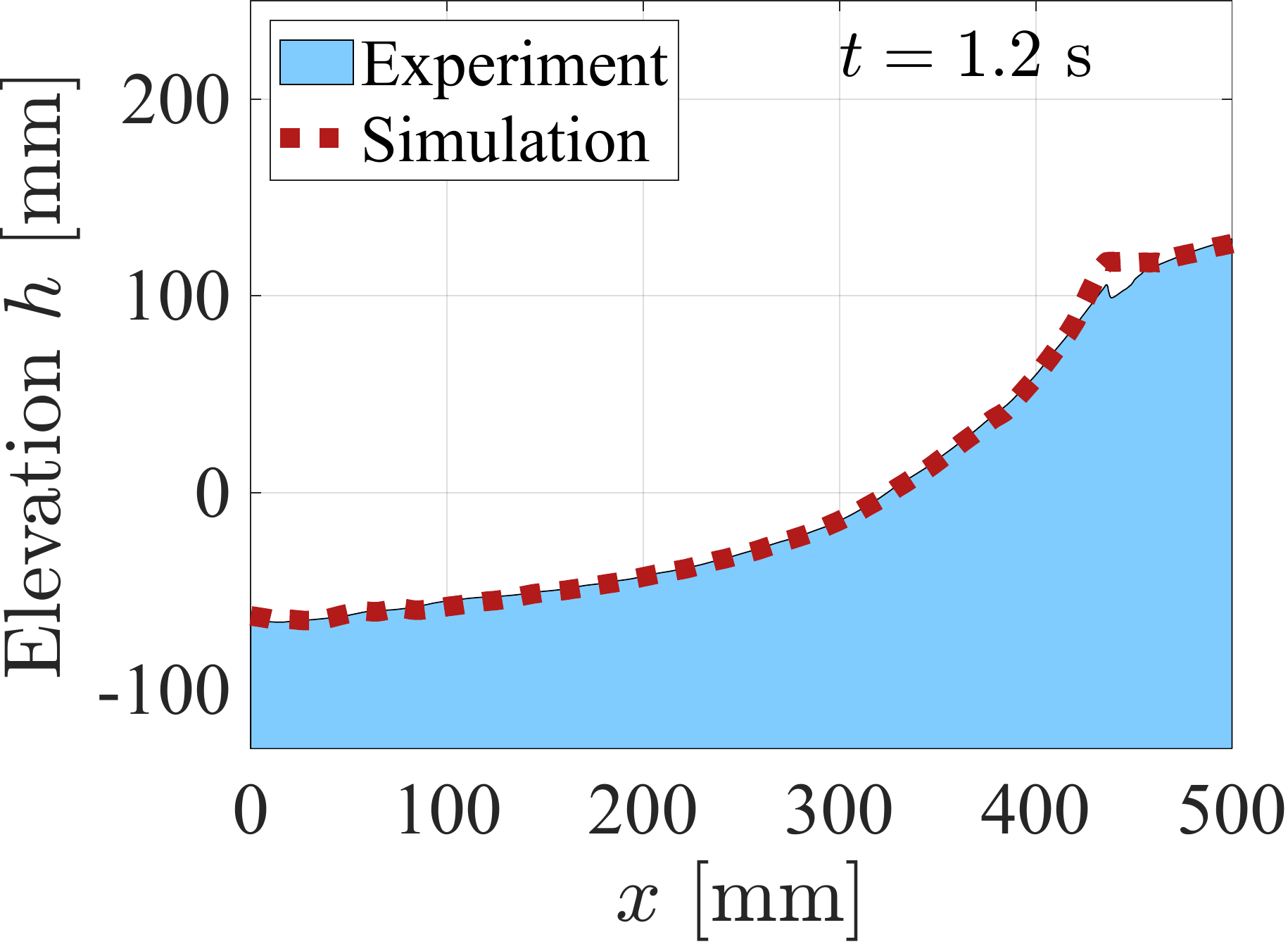}}\hfill
	\subfloat{\includegraphics[width=0.31\linewidth]{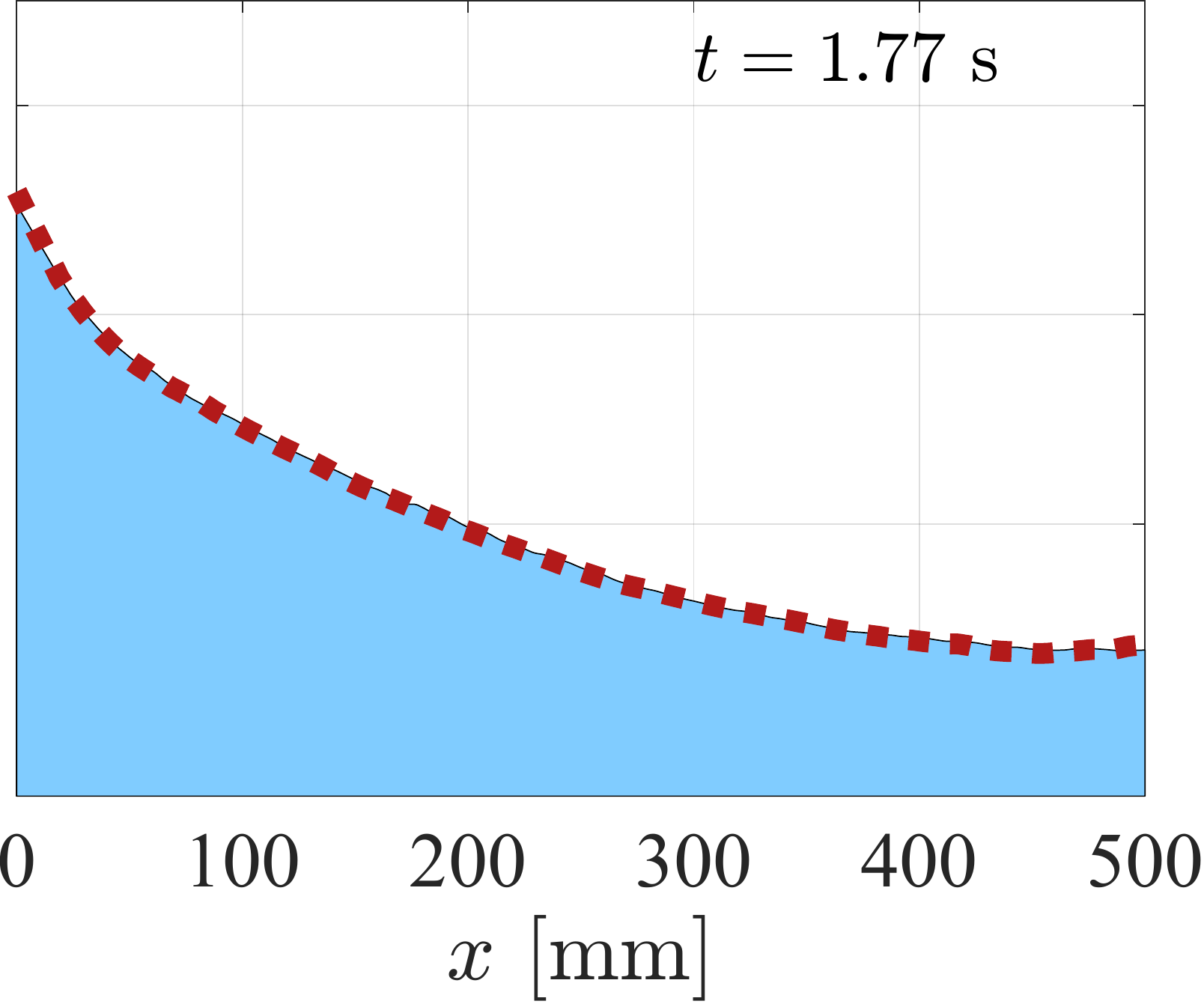}}\hfill
	\subfloat{\includegraphics[width=0.31\linewidth]{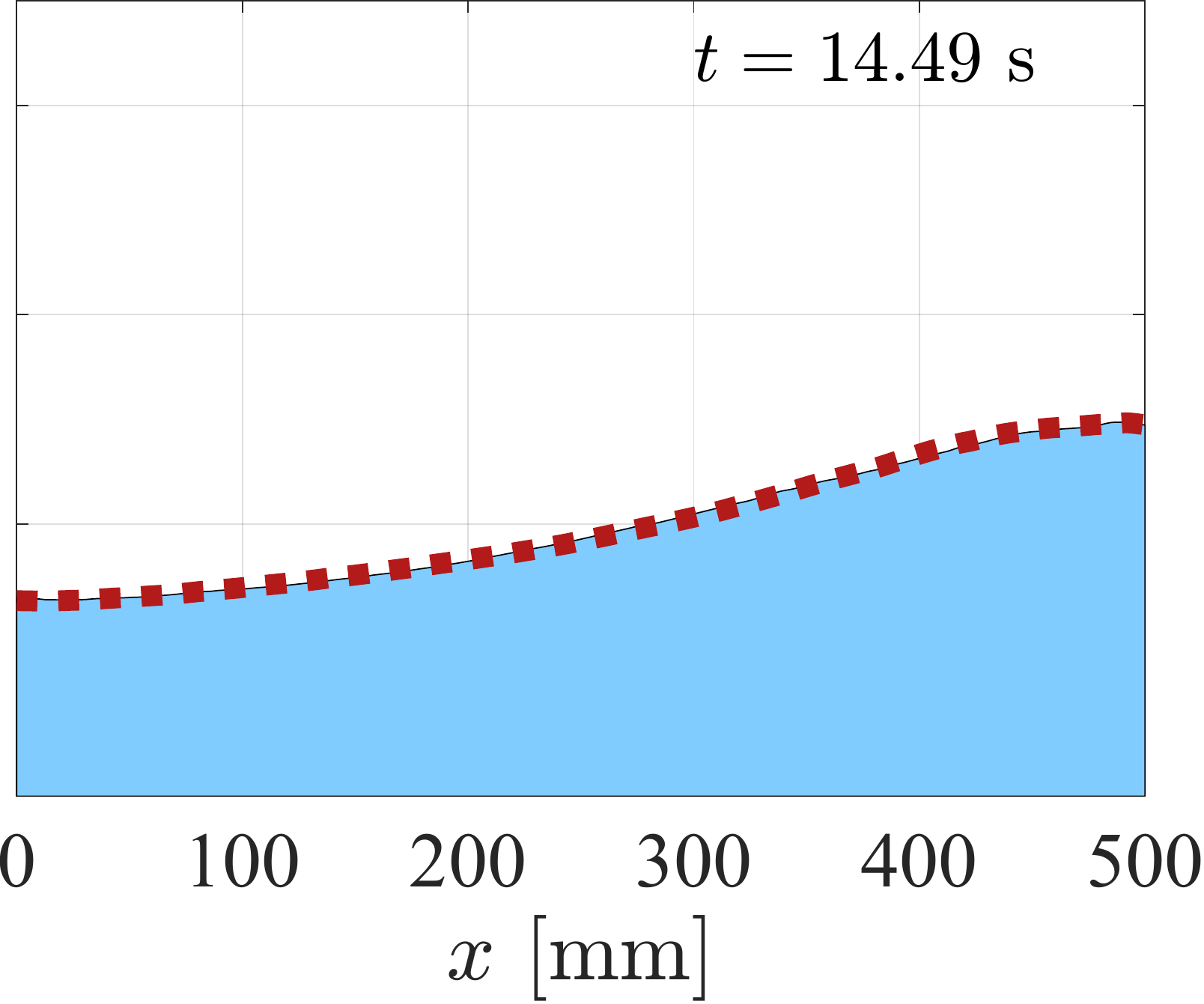}}
	\caption{The experimentally measured surface profile decay agrees with our 6D SSM model prediction for Trajectory 2.}\label{fig:sloshingprofile123}
\end{figure}

We note that the combined higher modal content in the signal is small - only about 10~\% with respect to the first mode.
This is because the data is decaying from steady states induced by forcing near the first eigenfrequency.
Due to their symmetric shape, isolated forcing of the second and fourth modes is not possible with horizontal harmonic excitation.
Nevertheless, we are able to capture these smaller oscillations on the SSM.
The key technology allowing this enhancement is the enforcement of the delay-embedded tangent space in our SSM reconstruction, based on the theoretical eigenfrequencies and mode shapes.

Due to the small activation of the higher modes, our model is expected to be sensitive to noise.
It is, however, stable with respect to changes in starting time, manifold order, and delay parameters.
A more robust model can be obtained by decreasing the manifold dimension to 4, neglecting the relatively minor influence of the fourth mode, resulting in an average NMTE of 3.9~\%.
Here, since our objective was modal analysis of different flow states, we chose the more detailed 6D model.

\section{Conclusions}\label{sec:conclusions}
We have shown that for a scalar observation of an invariant manifold tangent to a spectral subspace at a fixed point, the delay-embedded reconstruction of the tangent space is dependent only on the corresponding eigenvalues of the full system linearized at that point.
In particular, we have proven that the columns of a \van{}, given by repeated multiplication of the exponential of the eigenvalues times the timelag, are eigenvectors for the linearized system in the observable space.
Therefore, the \van{} diagonalizes the linear part of the delay-embedded dynamics. 
We have also shown that when several quantities are measured and delay-embedded simultaneously, the tangent space can be expressed given the \van{} and the mode shapes expressed in the observable function components. 
These results hold for any invariant manifold tangent to a modal subspace with distinct eigenvalues, including, e.g., classic stable manifolds.
Here, our focus was the application of this result to spectral submanifolds of hyperbolic fixed points.

In an attempt to exploit this uncovered structure, we have shown that for data-driven SSM model reduction, when the eigenvalues are approximately known, we can analytically predict the tangent space of the embedded SSM a~priori to achieve local modal decomposition and aid the reconstruction of the nonlinear reduced dynamics. 
We have found that even for small activation of higher modes, this trick helps modeling complex multimodal nonlinear dynamics on an SSM, which in turn allows for analysis of modal energy interchange and instantaneous frequencies.

Our theory assumes a generic observable function, which we describe in more detail in our first and second example, and distinct eigenvalues. 
While the second assumption is a generic one in a mathematical sense, it is not always satisfied for engineering structures with symmetry. 
Vibrations in a square plate is an example where our theory would fail, as it has repeated eigenvalues.
Using a vector-valued observable may help in differentiating between the modes in such a case.
Further, while technically covered by the theory, possible practical difficulties related to the conditioning of the \van{} include highly different or very similar eigenvalues, or eigenvalues of different stability type.

In our third example with data from experiments, we also devised a new heuristic scheme for using delay embedding to study modal contents in a signal. 
With this method, that served as an initial guess, we projected the data onto the respective prescribed modal subspaces, thereby implicitly assuming that the SSM of each mode is nearly flat.
An interesting development of this idea would be its use as a filter, which could remove or keep certain frequencies in a signal.
Another idea would be to use the tangent space condition as a verification or for iterative adjustment of the linear fit of the reduced dynamics.
Finally, in analogy with a Fourier analysis, it would be possible to estimate both instantaneous frequency and damping of a signal by singular value decomposition of the trajectory in delay coordinates followed by analysis of the Vandermonde matrix columns.
This ties in with several other observations made in the literature; for example, for a linear system, these columns agree with the recently proposed notion of principal component trajectories \cite{dylewsky22}.
Overall, we believe that our findings shed more light on delay-embedding invariant manifolds and selecting delay parameters in particular.
For that reason, we expect these results to be of use for a wide range of data-driven methods. 

\subsection*{Acknowledgements}
We are grateful to Kerstin Avila and Bastian Bäuerlein (U. Bremen) for sharing their experimental surface profile data from Ref.~\cite{bauerlein21} with us.

\appendix{}
\section{Appendix}\label{sec:proofs}
\subsection{Proof of Theorem 1}\label{sec:tangentproof}
Let $\mfd$ be a $\ssmdim$-dimensional invariant manifold of \eqref{eq:fullsystem} containing the origin of $\R^\fulldim$.
The tangent space of $\mfd$ at the origin can be written $T_{\vct 0} \mfd = \spann{\{\eigv_\modi\}_{\modi\in \modiset}}$, where $\modiset \subset \{1,\dots,\fulldim\}$ is an index set labeling the $\ssmdim$ eigenvectors $\eigv_\modi$ spanning the spectral subspace from which the manifold is emanating. 
For example, for a stable manifold, $\modiset = \{\modi : \real{\lambda_{\modi}}<0\}$.
We also assume that the eigenvalues in question are distinct, $\geni\ne \modi \Leftrightarrow \lambda_{\geni} \ne \lambda_{\modi}$, for $\geni,\modi \in \modiset$.

To simplify the notation, we transform the full state space to modal coordinates \eqref{eq:modsys}. 
We rewrite the observable function on the system $\vct{\fullc} \in \R^\fulldim$ as an observable on the modal coordinate system $\vct \modc \in \Ce^\fulldim$ as $\hat{\obs}(\vct \modc) = \obs(\Eigv \vct \modc)$. 
We denote by $\ssmflow^t = \Eigv \circ \fullflow^t \circ \Eigv^{-1}$ the flow in $\Ce^\fulldim$. 
Consider the sampling map in modal coordinates 
\begin{equation}
	\hat{\sampmap} = \left[\begin{array}{c}
	\hat{\obs} \\
	\hat{\obs}\circ\ssmflow^\tau \\
	\hat{\obs}\circ\ssmflow^{2\tau} \\
	\vdots \\
	\hat{\obs}\circ\ssmflow^{(\deldim-1)\tau}\end{array}\right]
	 : \Ce^\fulldim \to \R^{\deldim},\quad \vct{\delc} = \hat{\sampmap}(\vct{\modc}).
\end{equation}
Under the conditions of Takens's embedding theorem, the delay embedding map $\vct\Psi = \hat{\sampmap}|_{\mfd}: \mfd \to \mfdo$ is a smooth embedding with a smooth inverse $\vct\Psi^{-1}: \mfdo \to \mfd$, and $\vct\Psi(\vct 0) = \delp$. 

In order to derive the tangent space $T_{\delp}\mfdo$, we compute the derivative of the embedding at $\vct 0$:
\begin{equation}
	\D\vct\Psi(\vct 0) = 
	\left[\begin{array}{c}
	\D\hat{\obs}(\vct 0) \\
	\D\hat{\obs}(\ssmflow^\tau(\vct 0)) \circ \D\ssmflow^\tau(\vct 0) \\
	\vdots \\
	\D\hat{\obs}(\ssmflow^{(\deldim-1)\tau}(\vct 0))\circ \D\ssmflow^{(\deldim-1)\tau}(\vct 0)
	\end{array}\right]
	= 
	\left[\begin{array}{c}
	\D\hat{\obs}(\vct 0) \\
	\D\hat{\obs}(\vct 0) \circ \e^{\vct\Lambda\tau} \\
	\vdots \\
	\D\hat{\obs}(\vct 0)\circ \e^{\vct\Lambda(\deldim-1)\tau}
	\end{array}\right].
\end{equation}
Now note that the $\deli$th component expressed in modal coordinates is
\begin{equation}
	\D\hat{\obs}(\vct 0)\circ \e^{\vct\Lambda \deli\tau} (\vct \modc)
	= \sum_{\modi\in \modiset} \left.\frac{\partial \hat{\obs}}{\partial \modc_{\modi}}\right|_{\vct 0} \e^{\lambda_{\modi} \deli\tau} \modc_{\modi},\quad \deli\in \{1,\dots,\deldim\}.
\end{equation} 
We define the \van{} $\vct{\Van}$ of the $\ssmdim$ eigenvalues $\left\{\lambda_{\modi}\right\}_{\modi \in \modiset}$ governing the linearized dynamics on $\mfd$ as $\Van_{\deli\modi} = \e^{\lambda_{\modi} \deli\tau}$.
We conclude that the tangent space of the observable manifold at the fixed point in modal coordinates can be written as
\begin{equation}
	T_{\delp}\mfdo 
	= \{\D\vct\Psi(\vct 0)\vct \modc,\ \vct \modc\in \Ce^{\fulldim}\} 
	= \range{\left\{
	\vct{\Van}
	\diag\left(\left.\frac{\partial \hat{\obs}}{\partial \vct \modc}\right|_{\vct 0}\right)
	\right\}} 
	= \range{\vct{\Van}},
\end{equation}
where the diagonal matrix acts only as a rescaling of each component of $\vct \modc$.
Therefore, one matrix representation of the tangent space of the manifold in the observable space is $\vct{\Van}$, which is independent both of the matrix $\Eigv$ of full system eigenvectors and the observable function $\obs$. 

Note that we must have $\frac{\partial \hat{\obs}}{\partial \modc_{\modi}}|_{\vct 0} \ne 0\quad \forall \modi\in \modiset$, which defines the genericity of $\obs$. 
In practice, this implies that the linearized observable function must contain contributions from all modal coordinates that we wish to model.
In addition, note that for the embedding of the tangent space itself, i.e., the linear system, $\deldim=\ssmdim$ suffices.

\subsection{Proof of Theorem 2}\label{sec:diagproof}
The flow on $\mfdo$ is
\begin{equation}
	\tilde{\ssmflow}^t = \vct \Psi\circ \ssmflow^t \circ \vct \Psi^{-1}.
\end{equation}
We compute the ODE on $\mfdo$, $\dot{\vct{\delc}} = \delf(\vct{\delc})$, as
\begin{equation}
	\delf = \frac{d}{dt}\tilde{\ssmflow}^t = \D\vct\Psi\circ \modf \circ \vct\Psi^{-1},
\end{equation}
where $\modf$ is given by (\ref{eq:modsys}).
The derivative of $\delf$ at the fixed point is, therefore,
\begin{equation}
\begin{aligned}
	\D \delf(\delp) 
	&= \D\vct\Psi(\vct 0)\circ \vct\Lambda \circ \D\vct\Psi^{-1}(\delp)
	 = \vct{\Van}
	 \diag\left(\left.\frac{\partial \hat{\obs}}{\partial \vct \modc}\right|_{\vct 0}\right) 
	 \vct\Lambda 
	 \diag\left(\left.\frac{\partial \hat{\obs}}{\partial \vct \modc}\right|_{\vct 0}\right)^{-1} 
	 \vct{\Van}^{\dagger} \\
	&= \vct{\Van} \vct\Lambda \vct{\Van}^{\dagger},
\end{aligned}
\end{equation}
where we used the commutative property of multiplication of diagonal matrices, which eliminates the linearized observable function terms, and the fact that $\D\vct \Psi^{-1}(\delp) = \diag\left(\left.\frac{\partial \hat{\obs}}{\partial \vct \modc}\right|_{\vct 0}\right)^{-1} \vct{\Van}^{\dagger}$ is well-defined.
To see this, note that the derivative of the delay embedding map composed with its inverse 
\begin{equation}
	\D\vct \Psi(\vct 0)\circ \D\vct \Psi^{-1}(\delp)
	= \vct{\Van}\diag\left(\left.\frac{\partial \hat{\obs}}{\partial \vct \modc}\right|_{\vct 0}\right)
	\diag\left(\left.\frac{\partial \hat{\obs}}{\partial \vct \modc}\right|_{\vct 0}\right)^{-1} 
	 \vct{\Van}^{\dagger}
	 =
	 \vct{\Van}\vct{\Van}^{\dagger},
\end{equation}
maps all points in $T_{\vct 0}\mfd$ to themselves, since $\vct{\Van}$ has full rank under the assumption of distinct eigenvalues $\left\{\lambda_\modi\right\}_{\modi\in\modiset}$.

Taylor-expanding the ODE on $\mfdo$ in the observable space yields
\begin{equation}
	\dot{\vct{\delc}} = \D \delf(\delp)(\vct{\delc}-\delp) + \lilordo{|\vct{\delc}-\delp|} = \vct{\Van} \vct\Lambda \vct{\Van}^{\dagger}(\vct{\delc}-\delp) + \lilordo{|\vct{\delc}-\delp|}.
\end{equation}
Therefore, under the assumptions of a generic observable function and distinct eigenvalues, the tangent space $T_{\delp}\mfdo$ and the linearized dynamics $\D \delf(\delp)$ in the observable space $\R^\deldim$ are both fully determined by the timelag $\tau$ and the eigenvalues $\lambda_{\modi}$, $\modi \in \modiset$. 

In the special case that $\mfd = \R^\fulldim$, if $\deldim\ge 2\fulldim+1$, the entire phase space can be reconstructed.
For a linear system, $\deldim=\fulldim$ suffices, and the delay embedding reduces to a linear operator.

\subsection{Proof of Theorem 3}\label{sec:multivarproof}
For a vector-valued observable, $\vct \obs : \R^{\fulldim} \to \R^{\obsdim}$, the delay embedding map reads
\begin{equation}
	\vct\Psi = 
	\left[\begin{array}{c}
	\vct\Psi_{\obs_1} \\ \vdots \\ \vct\Psi_{\obs_{\obsdim}}
	\end{array}\right]
	: \mfd \to \mfdo \subset \R^{\deldim\obsdim},\quad
	\D\vct\Psi(\vct 0) = 
	\left[\begin{array}{c}
	\D\vct\Psi_{\obs_1}(\vct 0) \\ \vdots \\ \D\vct\Psi_{\obs_{\obsdim}}(\vct 0)
	\end{array}\right],
\end{equation}
where $\vct\Psi_{\obs_{\obsi}}$ is the delay embedding map corresponding to the $\obsi$th component of the observable function $\vct\obs$.
The derivatives are given by
\begin{equation}
	\D\vct\Psi_{\obs_{\obsi}}(\vct 0) 
	= \vct{\Van}
	\diag\left(\left.\frac{\partial \hat{\obs}_{\obsi}}{\partial \vct \modc}\right|_{\vct 0}\right).
\end{equation}
In this case, the tangent space is not independent of the observable function. 
Instead, it is affected by the relative dependency of each component $\obs_{\obsi}$ of the observable function on each modal coordinate $\modc_{\modi}$.
The tangent space can, however, be expressed as
\begin{equation}
	T_{\delp}\mfdo = 
	\range
	\left[\begin{array}{c}
	\vct{\Van}
	\diag\left(\left.\frac{\partial \hat{\obs}_1}{\partial \vct \modc}\right|_{\vct 0}\right) \\ 
	\vdots \\ 
	\vct{\Van}
	\diag\left(\left.\frac{\partial \hat{\obs}_{\obsdim}}{\partial \vct \modc}\right|_{\vct 0}\right)
	\end{array}\right].
\end{equation}

\printbibliography

\end{document}